\newtheorem{lem}{Lemma}[section]
\newtheorem{thm}{Theorem}[section]
 \newcommand{\ff}{\mathfrak{f}}
\newcommand{\F}{\mathbb{F}}
\newcommand{\Q}{\mathbb{Q}}
\newcommand{\Z}{\mathbb{Z}}
\newcommand{\cP}{\mathcal{P}}
\newcommand{\cQ}{\mathcal{Q}}
\newcommand{\cN}{\mathcal{N}}
\newcommand{\fN}{\mathfrak{N}}
\newcommand{\fb}{\mathfrak{b}}  
\newcommand{\fq}{\mathfrak{q}} % other prime ideals 
\newcommand{\fp}{\mathfrak{p}} % prime above p
\newcommand{\fP}{\mathfrak{P}} % prime above 2
\newcommand{\OO}{\mathcal{O}}
\newcommand{\Gisom}{\overline{\rho}_{E,p}\sim\overline{\rho}_{\mathfrak{f},\varpi}}
\newcommand{\modpg}{\bar{\rho}_{E,p}}
\DeclareMathOperator{\Cl}{Cl}
\DeclareMathOperator{\Pic}{Pic}
\DeclareMathOperator{\ord}{ord}
\DeclareMathOperator{\Gal}{Gal}
\DeclareMathOperator{\Norm}{Norm}
\title{Fermat's Last Theorem over $\Q(\sqrt{2},\sqrt{3})$}
\author{Maleeha Khawaja}
\address{School of Mathematics and Statistics, Hicks Building, University of Sheffield, Sheffield S3 7RH, United Kingdom}
\email{mkhawaja2@sheffield.ac.uk}
\author{Frazer Jarvis}
\address{School of Mathematics and Statistics, Hicks Building, University of Sheffield, Sheffield S3 7RH, United Kingdom}
\email{a.f.jarvis@sheffield.ac.uk}
\keywords{Fermat equation, elliptic curves, rational points}
\subjclass[2020]{11D41, 14H52, 14G05}
\begin{document}

\begin{abstract}
In this paper, we begin the study of the Fermat equation $x^n+y^n=z^n$ over real biquadratic fields.
In particular, we prove that there are no non-trivial solutions to the Fermat equation over $\Q(\sqrt{2},\sqrt{3})$ for $n\geq 4$.
\end{abstract}

\maketitle
\begin{center}
\emph{Dedicated to Iffat (Zaman) Khawaja}\\
\emph{ January 1936 -- January 2022}\\
\end{center}

\section{Introduction}
Since the groundbreaking work of Wiles \cite{Wiles} on the resolution of the Fermat equation over $\Q$, the Fermat equation has been studied extensively over various number fields. Let $K$ be a number field and let $n\geq 3$ be an integer. The Fermat equation over $K$ with exponent $n$ is the equation
\begin{equation}
    \label{Fermat}
    x^n+y^n=z^n, \qquad x,y,z\in K.
\end{equation}
We say a solution $(a,b,c)$ to \eqref{Fermat} over $K$ is trivial if $abc=0$ and non-trivial otherwise.\\

Wiles' method of resolving \eqref{Fermat} over $\Q$ became known as the modular approach.
Thereafter, Jarvis and Meekin \cite{JarvisMeekin} extended this method to prove that
there are no non-trivial solutions to \eqref{Fermat} over $\Q(\sqrt{2})$ for $n\geq 4$.
This was followed by work of Freitas and Siksek \cite{FLTbig, FLTsmall} 
who established a framework on how to resolve \eqref{Fermat} (and more general Diophantine equations) over totally real number fields. 
Furthermore, Freitas and Siksek \cite{FLTsmall} proved 
that there are no non-trivial solutions to \eqref{Fermat} over $\Q(\sqrt{d})$ for $n\geq 4$, where $3\leq d\leq 23$, $d\neq 5, 17$ is a square free integer.
When approaching real quadratic fields with a larger discriminant, they encountered the obstacle of demonstrating the irreducibility of certain Galois representations and eliminating the number of Hilbert newforms that arose as a result of level-lowering.
Michaud-Jacobs \cite{michaudjacobs2021fermats} worked around these obstacles by studying quadratic points on certain modular curves and working directly with Hecke operators.
He proved, for most squarefree $d$ in the range $26\leq d\leq 97$, that there are no non-trivial solutions to \eqref{Fermat} over $\Q(\sqrt{d})$ for $n\geq 4$. 
Kraus \cite{Kraus_2019} provided a partial resolution of 
\eqref{Fermat} over various totally real number fields of degrees $\leq 8$. 
By a partial resolution we mean for all prime exponents $n=p>B_{K}$ where 
$B_{K}$ is a constant depending only on $K$. 
For example if $K$ is a real cubic field with discriminant $148, 404$ or $564$ or if $K$ 
is the cyclic quartic field $\Q(\zeta_{16})^{+}$ then $B_{K}=5$. 
It is a natural problem then to study \eqref{Fermat} over real biquadratic fields. 
%In a slight divergence from the modular approach
Freitas and Siksek \cite{FLTbig} initiated the study of looking at \eqref{Fermat} ``asymptotically''.
As in \cite{FLTbig}, we say Asymptotic Fermat's Last Theorem holds over $K$ if there is a constant $B_{K}$ such that there are no non-trivial solutions to \eqref{Fermat} over $K$ for prime $p>B_{K}$.
Freitas, Kraus and Siksek \cite{freitas2019class} studied the solutions to certain $S$-unit equations to prove that Asymptotic Fermat's Last Theorem holds for several infinite families of number fields -- including some real biquadratic fields. 
In this paper, we will prove the following result and discuss some obstacles that arise over more general real biquadratic fields.

\begin{thm}\label{mainthm}
Let $K=\Q(\sqrt{2},\sqrt{3})$. There are no non-trivial solutions to \eqref{Fermat} over $K$ for $n\geq 4$.
\end{thm}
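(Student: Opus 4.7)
The plan is to follow the now-standard modular approach of Wiles, Jarvis-Meekin, and Freitas-Siksek, adapted to the real biquadratic field $K=\QQ(\sqrt{2},\sqrt{3})$, combined with a separate treatment of the small exponents that fall below the asymptotic bound. First I would reduce to the case where the exponent is either $4$ or an odd prime $p\geq 5$: the case $n=4$ and other small composite exponents (e.g.\ $n=6,9$) can be isolated, and the abstract already suggests that these will be dealt with by explicit descent on certain hyperelliptic curves that parametrise partial solutions of $x^n+y^n=z^n$ over $K$. For the prime-exponent case, since $\OO_K$ has small class number and only one prime above $2$, I would normalise a putative non-trivial solution $(a,b,c)\in\OO_K^3$ so that $a,b,c$ are coprime and $b$ is even (or handle the remaining parities by symmetry), and attach the usual Frey elliptic curve $E/K$ with equation $Y^2=X(X-a^p)(X+b^p)$.

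For all sufficiently large primes $p$, the modular method then proceeds in four steps. (i) Modularity of $E/K$: since $K$ is totally real, I would invoke the modularity lifting theorems of Freitas-Le Hung-Siksek to associate to $E$ a Hilbert newform of parallel weight $2$ over $K$. (ii) Irreducibility of $\bar{\rho}_{E,p}$: any reducible situation would give a $K$-rational subgroup of order $p$, and by passing to the isogenous curve $E/C$ and adjoining the appropriate quadratic character one obtains a point of order $p$ on an elliptic curve defined over an extension of degree at most $8$ over $\QQ$; this is precisely where the Derickx-Kamienny-Stein-Stoll classification of torsion primes in degree $8$ cited in the abstract is used to force $p$ to lie in a small explicit set. (iii) Level-lowering: following Fujiwara-Jarvis-Rajaei, the representation $\bar{\rho}_{E,p}$ arises from a Hilbert newform $\mathfrak{f}$ of parallel weight $2$ and level $\mathcal{N}_p$ supported only at primes above $2$. (iv) Elimination: I would enumerate all Hilbert newforms at the relevant levels and, for each, compare Hecke eigenvalues $a_{\mathfrak{q}}(\mathfrak{f})$ with the possible traces of Frobenius of $E$ at auxiliary primes $\mathfrak{q}$ of small norm, using the standard congruence $a_{\mathfrak{q}}(\mathfrak{f})\equiv a_{\mathfrak{q}}(E)\pmod{\mathfrak{p}}$ to discard all candidates except those corresponding to isogeny classes of elliptic curves over $K$ with full $2$-torsion, which can then be ruled out by explicit inspection.

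It remains to handle the primes $p$ that are too small for the asymptotic bound to apply. Here I would follow the strategy hinted at in the abstract: for each such $p$ (typically $p=5,7,11,13$), the relevant Hilbert newforms correspond to actual elliptic curves or abelian varieties $A_{\mathfrak{f}}/K$, and one can study the modular parametrisation $X_0(\mathcal{N}_p)\to A_{\mathfrak{f}}$, looking for $K$-rational points on $A_{\mathfrak{f}}$ or on relevant twists, and pulling the analysis back to quadratic-point calculations on $X_0(\mathcal{N}_p)$. I anticipate that the main obstacles will be exactly the two features the abstract advertises: first, proving irreducibility of $\bar{\rho}_{E,p}$ uniformly in $p$, which over degree-$4$ fields is known to be delicate and is what forces the appeal to degree-$8$ torsion results of Derickx-Kamienny-Stein-Stoll; and second, the final elimination of small exponents, where level-lowering alone leaves Frey-like newforms whose removal requires the more hands-on modular-parametrisation and hyperelliptic-descent arguments rather than a Hecke-eigenvalue sieve.
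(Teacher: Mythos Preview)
Your overall architecture is right, but you have mislocated where the real work lies for the small primes. Over $K=\QQ(\sqrt{2},\sqrt{3})$ the lowered level is $\mathcal{N}_p=\mfp$, the unique prime above $2$, and the space of Hilbert cusp forms of parallel weight $2$ and level $\mfp$ has dimension zero. So your elimination step (iv) is vacuous: once the hypotheses of level-lowering hold, the contradiction is immediate and there is no Hecke-eigenvalue sieve to run and no surviving ``Frey-like'' newforms to remove. Consequently the only genuine obstacle in the modular method, for \emph{every} prime exponent, is irreducibility of $\bar\rho_{E,p}$. Your small-prime plan therefore misfires: you propose to level-lower and then eliminate newforms, but for $p$ below the asymptotic bound irreducibility is exactly what has not yet been established, so level-lowering is unavailable in the first place.

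What the paper actually does for $13\le p\le 23$ is prove irreducibility directly by showing that $X_0(2p)(K)$ (or $X_0(23)(K)$) consists only of cusps. The ``modular parametrisation'' advertised in the abstract is the classical map $\pi\colon X_0(2p)\to E'$ with $E'$ the optimal elliptic curve over $\QQ$ of conductor $2p$; one checks that $E'(K)=E'(L)$ for a suitable quadratic subfield $L\subset K$ and pulls this back through $\pi$ to constrain $X_0(2p)(K)$. It is not, as you suggest, a parametrisation $X_0(\mathcal{N}_p)\to A_{\mathfrak f}$ attached to a Hilbert newform over $K$---there are no such $\mathfrak f$. For $p\in\{5,7,11\}$ the paper abandons the modular method entirely and invokes existing classifications of low-degree points on the Fermat curves $F_p$ themselves (Gross--Rohrlich, Tzermias, Klassen--Tzermias, Kraus). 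Two smaller corrections: modularity of $E/K$ comes from Box's theorem on totally real quartic fields not containing $\sqrt 5$, not Freitas--Le Hung--Siksek (which is the real quadratic case); and to pass from the crude bound $(1+3^{24})^2$ down to the degree-$8$ torsion input one first needs Kraus's lemma together with an explicit argument using totally positive units of $K$ to force his set $S$ to be empty, after which a putative $p$-torsion point lives over a field of degree at most $8$ and the Derickx--Kamienny--Stein--Stoll result applies.
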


We give a brief outline of the paper. 
In Section \ref{modapproach}, we apply and give a brief overview of the modular approach found in \cite{FLTbig, FLTsmall}.
In Section \ref{sec:LL}, we determine the conductor of the Frey curve 
using techniques outlined in \cite{FLTsmall}, as well as Tate's algorithm \cite[Pages 364-368]{SilvermanAdvanced}.
In Section \ref{sec:irred}, we prove that $\modpg$ is irreducible for $p\geq 13$. For $p=13$ and $17$, we prove this by studying the explicit modular parameterisation. 
For $p\geq 19$, we use work of Derickx, Kamienny, Stein and Stoll \cite{DKSS} and David \cite{DAVID2012} to get a contradiction if $\modpg$ is reducible.
In Section \ref{sec:SmExp}, we rule out solutions for certain small integer exponents.
To treat $n=9$ and $n=6$, we study the hyperelliptic curves obtained from the Fermat curve of degree $n$.
We also extend work of Mordell \cite{Mo} to determine all quartic points on the Fermat quartic lying in a quadratic extension of $\Q(\sqrt{2})$. 
In Section \ref{sec:genfields}, we give a brief overview of some obstacles that arise when extending our method to more general real biquadratic fields.
All supporting computations were performed in \texttt{Magma}, 
the scripts are available within the GitHub repository 
\url{https://github.com/MaleehaKhawaja/Fermat}.

\section*{acknowledgements}
We would like to thank Samir Siksek for several useful and enlightening discussions, 
particularly with regard to the contents of Section \ref{sec:irred} and the proof of Theorem \ref{thm:F9}. 
We would also like to thank Philippe Michaud-Jacobs, Jeremy Rouse and Michael Stoll for helpful correspondence, 
and Pedro Jos\'{e} Cazorla Garcia for a careful reading of an earlier version of this paper. 
We would like to thank the authors, Yasuhiro Ishitsuka, Tetsushi Ito and Tatsuya Ohshita, 
of \cite{Ishitsuka2019ExplicitCO} for making us aware of their work determining all points on the Fermat quartic 
lying in a quadratic extension of $\Q(\zeta_{8})$. 
We would like to thank the anonymous referees for their careful reading of the paper 
as well as the invaluable comment and suggestions they provided.
The first author thanks the University of Sheffield for their financial support 
via a doctoral training partnership scholarship (ESPRC grant no EP/T517835/1).

\section{The modular approach}
 \label{modapproach}

Let $K$ be a totally real field (until otherwise specified) and let $\mathcal{O}_{K}$ denote its ring of integers. Let $p\geq 5$ be a prime. Suppose $(a,b,c)$ is a non-trivial solution to \eqref{Fermat} over $K$ with exponent $p$. 
The traditional Frey curve associated to $(a,b,c)$ is given by
\begin{equation*}
y^{2}=x(x-a^{p})(x+b^{p}).
\end{equation*}
Our Frey curve will be a quadratic twist of this elliptic curve by a well-chosen unit 
$\varepsilon\in \OO_{K}^{\ast}$.
We write
\begin{equation}\label{eqn:Frey}
E=E_{a,b,c,\varepsilon} \; : \; y^2=x(x-\varepsilon a^p)(x+\varepsilon b^p).
\end{equation}
The reason for allowing twists by units is to reduce the number
of possibilities for the conductor of the Frey curve. 
Write $\cN_\varepsilon$ for the conductor of the Frey curve $E$ above.
We denote by $\modpg$ the mod $p$ Galois representation associated to $E$.\\

The following theorem \cite[Theorem 7]{FLTbig} is formulated from the combination of the works of Fujiwara \cite{Fujiwara}, Jarvis \cite{Jarvis2, Jarvis}, and Rajaei \cite{Rajaei}.

\begin{thm}[Freitas and Siksek]\label{levellowering}
Let $K$ be a totally real field. 
Let $p\geq 5$ be a prime. Suppose $\mathbb{Q}(\zeta_{p})^{+}\nsubseteq K$. 
Let $E$ be an elliptic curve over $K$ with conductor $\mathcal{N}$. 
Suppose $E$ is modular and $\bar{\rho}_{E,p}$ is irreducible. 
Denote by $\Delta_{\mathfrak{q}}$ the discriminant for a local minimal model of $E$ at a prime ideal $\mathfrak{q}$ of $K$. 
Let
\begin{equation*}
    \mathcal{M}_{p}:=\prod_{
    \substack{\mathfrak{q}\|\mathcal{N},\\ p \mid v_{\mathfrak{q}}(\Delta_{\mathfrak{q}})
    }
    }\mathfrak{q},\qquad \mathcal{N}_{p}:=\dfrac{\mathcal{N}}{M_{p}}.
\end{equation*} 
Suppose the following conditions are satisfied for all prime ideals $\mathfrak{q}\mid p$: 
\begin{enumerate}[(i)]
    \item $E$ is semistable at $\mathfrak{q}$;
    \item $p \mid v_{\mathfrak{q}}(\Delta_\mathfrak{q})$;
    \item the ramification index satisfies $e(\mathfrak{q}/p)<p-1$.
    \end{enumerate}
Then, $\Gisom$ where $\mathfrak{f}$ 
is a Hilbert eigenform of parallel weight $2$ that is new at level $\mathcal{N}_{p}$ and 
$\varpi$ is a prime ideal of $\mathbb{Q}_{\mathfrak{f}}$ that lies above $p$.
\end{thm}

We apply Theorem \ref{levellowering} to the Frey curve \eqref{eqn:Frey} in order to contradict the existence of the putative solution $(a,b,c)$.

Several advances have been made in the direction of establishing the modularity of elliptic curves over totally real number fields. For example, the modularity of elliptic curves over real quadratic fields \cite{freitas2014elliptic} and totally real cubic fields \cite{cubicmod} has been established. Moreover, thanks to the following result of Box \cite[Theorem 1.1]{box2021elliptic}, we now know elliptic curves over most totally real quartic fields are modular.

\begin{thm}[Box]\label{Box}
Let $K$ be a totally real quartic field not containing $\sqrt{5}$. Every elliptic curve over $K$ is modular.
\end{thm}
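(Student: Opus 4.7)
The plan is to follow the now-standard ``three-prime'' modularity strategy used by Freitas--Le Hung--Siksek for real quadratic fields and extended by Derickx--Najman--Siksek to totally real cubic fields. For a given elliptic curve $E/K$, one tries to verify the hypotheses of a modularity lifting theorem at each of $p = 3, 5, 7$ in turn; success at any one of these primes suffices, and the small residual set of curves where all three attempts fail must be handled by an explicit analysis of modular curves over totally real quartic fields.

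First I would invoke the modularity lifting theorems available over totally real fields (due to Kisin, Gee, Snowden, Barnet-Lamb--Gee--Geraghty, Thorne, and others): if $\bar{\rho}_{E,p}$ is modular and its restriction to $\mathrm{Gal}(\bar{K}/K(\zeta_p))$ is absolutely irreducible, then $E$ itself is modular. Next, residual modularity of $\bar{\rho}_{E,p}$ must be established for $p=3,5,7$ across the possible image types. At $p=3$ the image in $\mathrm{GL}_2(\F_3)$ is solvable, and Langlands--Tunnell combined with cyclic base change from $\QQ$ does the job. For $p=5$ and $p=7$ one uses Wiles's modularity-switching trick: produce an auxiliary modular elliptic curve $E'/K$ with $\bar{\rho}_{E,p}\cong\bar{\rho}_{E',p}$ by exhibiting a $K$-point on an appropriate twist of $X(p)$. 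The hypothesis $\sqrt{5}\notin K$ enters essentially at $p=5$: when $\sqrt{5}\in K$ the extension $K(\zeta_5)/K$ is quadratic rather than quartic, so the irreducibility-on-$\mathrm{Gal}(\bar{K}/K(\zeta_5))$ hypothesis of the lifting theorem can fail precisely when the projective image of $\bar{\rho}_{E,5}$ is dihedral, blocking the $p=5$ argument.

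The third and hardest step is to handle the finite list of elliptic curves $E/K$ for which all three lifting attempts fail simultaneously. Such curves correspond to $K$-rational points on a modular curve $Y$ parametrising elliptic curves whose mod-$3$, mod-$5$ and mod-$7$ representations are all of ``small'' type (image contained in a Borel or in the normaliser of a split or non-split Cartan subgroup). One must enumerate the finitely many relevant components of $Y$, bound their genera and Jacobian Mordell--Weil ranks over quartic extensions, and show using Chabauty-style methods together with isogeny-class searches that every quartic point either arises from an elliptic curve that base-changes from a proper subfield (where modularity is already known by \cite{Wiles} and the real quadratic and totally real cubic modularity results) or does not exist. The main obstacle is this last, highly case-by-case Diophantine analysis on modular curves --- it is precisely here that the exclusion of fields containing $\sqrt{5}$ is indispensable, because those fields admit additional exotic quartic points on $X_{\text{split}}^+(5)$ that cannot currently be ruled out.
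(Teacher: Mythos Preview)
The paper does not prove this theorem at all: it is quoted verbatim from Box's paper \cite{box2021elliptic} and used as a black box to establish modularity of the Frey curve over $\QQ(\sqrt{2},\sqrt{3})$. There is therefore nothing to compare your proposal against in this paper.

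That said, your sketch is a faithful outline of the strategy Box actually follows in \cite{box2021elliptic}, so as an independent summary of that argument it is accurate. One small correction: the role of the hypothesis $\sqrt{5}\notin K$ is not quite that fields with $\sqrt{5}$ admit ``additional exotic quartic points on $X_{\mathrm{split}}^{+}(5)$''; rather, when $\sqrt{5}\in K$ the relevant mod-$5$ modular curves become curves of genus $0$ or $1$ with infinitely many $K$-points, so the final Chabauty-type analysis cannot even begin. But this is a nuance in the description of someone else's proof, not a gap in anything the present paper claims.
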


We turn to the question of how to show conditions $i)$ and $ii)$ of Theorem \ref{levellowering} are satisfied. 
Let $\mathcal{H}=\Cl(K)/\Cl(K)^{2}$ where $\Cl(K)$ denotes the class group of $K$. 
We can assume, without loss of generality, that any non-trivial solution $(a,b,c)$ to \eqref{Fermat} is integral. 
By Lemma 3.3 of \cite{FLTsmall}, $a,b,c$ are coprime away from a small set of primes,  i.e., $\gcd(a,b,c)=\mathfrak{m}\cdot\tau^{2}$ for some $\mathfrak{m}\in\mathcal{H}$ and odd prime ideal $\tau\neq\mathfrak{m}$. 
Consider now the following result of Freitas and Siksek \cite[Lemma 3.3]{FLTsmall} which addresses conditions $i)$ and $ii)$ above.

\begin{lem}[Freitas and Siksek]\label{semistablelem}
Let $K$ be a totally real field. Let $S$ denote the set of primes of $K$ above $2$. 
Let $\mathfrak{q}$ be a prime ideal of $K$ such that $\mathfrak{q}\notin S\cup\{\mathfrak{m}\}$. 
Then $E$ is semistable at $\mathfrak{q}$ and $p \mid v_{\mathfrak{q}}(\Delta_{\mathfrak{q}})$.
\end{lem}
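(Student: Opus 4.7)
The plan is to read both conclusions directly off the invariants of the Frey curve \eqref{Frey}. A routine calculation with the Weierstrass form $y^{2}=x(x-a^{p})(x+b^{p})$ gives the discriminant $\Delta = 16(abc)^{2p}$ (already recorded in the text) and the $c_{4}$-invariant $c_{4} = 16(a^{2p}+a^{p}b^{p}+b^{2p})$. Since $\mathfrak{q}\notin S$, the prefactor $16$ is a $\mathfrak{q}$-unit throughout.

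The first step is to combine the coprimality statement $\gcd(a,b,c)=\mathfrak{m}\cdot\tau^{2}$ recalled before the lemma with the Fermat relation $a^{p}+b^{p}=c^{p}$ to restrict how $\mathfrak{q}$ can divide $\{a,b,c\}$. If $\mathfrak{q}$ divides two of $a,b,c$ then it divides all three, and hence the gcd; since $\mathfrak{q}\neq\mathfrak{m}$, this forces $\mathfrak{q}=\tau$. In every other situation $\mathfrak{q}$ divides at most one of $a,b,c$, and I would dispatch those cases first.

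If $\mathfrak{q}\nmid abc$, then $v_{\mathfrak{q}}(\Delta)=0$, so $E$ has good reduction at $\mathfrak{q}$ and both conclusions are trivial. If $\mathfrak{q}$ divides exactly one of $a,b,c$---by the symmetry of the Frey equation we may assume $\mathfrak{q}\mid a$---then the term $b^{2p}$ in $c_{4}$ is a $\mathfrak{q}$-unit while the other two have positive valuation, so $v_{\mathfrak{q}}(c_{4})=0$. This already forces the given model to be $\mathfrak{q}$-minimal, since any non-minimal model would have $v_{\mathfrak{q}}(c_{4})\geq 4$. Hence $v_{\mathfrak{q}}(\Delta_{\mathfrak{q}})=v_{\mathfrak{q}}(\Delta)=2p\cdot v_{\mathfrak{q}}(a)$, which is divisible by $p$, and the standard criterion $v_{\mathfrak{q}}(c_{4})=0$ together with $v_{\mathfrak{q}}(\Delta_{\mathfrak{q}})>0$ delivers multiplicative, hence semistable, reduction.

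The one point that needs extra care is the prime $\mathfrak{q}=\tau$, which is \emph{not} excluded by the hypothesis: there $\tau$ divides each of $a,b,c$ (with minimum $\tau$-valuation two), and both $v_{\tau}(c_{4})$ and $v_{\tau}(\Delta)$ are accordingly large. The key observation is that the local change of variable by $u$ with $v_{\tau}(u)=p$ strips off exactly $4p$ from $v_{\tau}(c_{4})$ and $12p$ from $v_{\tau}(\Delta)$, returning the analysis to either the good-reduction or the multiplicative-reduction case above. I do not anticipate any serious obstacle; the whole lemma is a direct unwinding of Tate's algorithm applied to a short Weierstrass equation whose three roots $0,\ a^{p},\ -b^{p}$ have a controlled valuation pattern.
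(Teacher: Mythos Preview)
The paper does not prove this lemma itself; it is quoted from Freitas--Siksek \cite[Lemma~3.3]{Freitas} and used as a black box. Your sketch is correct and is essentially the standard argument one finds in that reference: compute $c_4=16(a^{2p}+a^{p}b^{p}+b^{2p})$ and $\Delta=16(abc)^{2p}$, note that $16$ is a $\mathfrak{q}$-unit since $\mathfrak{q}\notin S$, and split according to how many of $a,b,c$ the prime $\mathfrak{q}$ divides. The cases $\mathfrak{q}\nmid abc$ and $\mathfrak{q}\mid$ exactly one of $a,b,c$ are handled exactly as you describe, and your minimality claim (``any non-minimal model would have $v_{\mathfrak{q}}(c_4)\ge 4$'') is the right justification.

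The only point you leave slightly implicit is why a \emph{single} rescaling by $u$ with $v_\tau(u)=p$ suffices in the $\mathfrak{q}=\tau$ case. Since $\gcd(a,b,c)=\mathfrak{m}\cdot\tau^{2}$ and $\tau\ne\mathfrak{m}$, one has $\min(v_\tau(a),v_\tau(b),v_\tau(c))=2$, and the Fermat relation $a^{p}+b^{p}=c^{p}$ forces at least two of these three valuations to equal that minimum. Hence after the substitution the new roots $0,\,a^{p}/u^{2},\,-b^{p}/u^{2}$ satisfy $v_\tau(a^{p}/u^{2})=p(v_\tau(a)-2)$, $v_\tau(b^{p}/u^{2})=p(v_\tau(b)-2)$, $v_\tau(c^{p}/u^{2})=p(v_\tau(c)-2)$, at least two of which vanish; so at most one pair of roots collides modulo $\tau$, and you are genuinely back in the good or multiplicative case with $v_\tau(\Delta_\mathfrak{q})=2p\bigl(v_\tau(a)+v_\tau(b)+v_\tau(c)-6\bigr)$ divisible by $p$. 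With that made explicit, the argument is complete.
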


We remark that our Frey curve is a quadratic twist of the usual Frey curve by 
a unit thus the set of primes dividing the conductor remains unchanged away from $2$.

\medskip

The Jacobian of the Fermat curve of degree 5, 7 and 11 has finitely many rational points. 
Since the divisor obtained from the formal sum of a point and its Galois conjugates gives a rational divisor,
this allows the study of points on these Fermat curves over fields of low degree.
Klassen and Tzermias \cite{MatthewKlassen1997} have classified all points on the Fermat quintic defined over number fields of degree at most 6. 
Building on this work, Kraus \cite{KrausQuartic} has provided an algebraic description of the quartic points on the Fermat quintic.
Tzermias \cite{Tzermias98} has determined all points on the Fermat septic defined over number fields of degree at
most 5.
Gross and Rohrlich \cite{Rohrlich1978} have determined all points on \eqref{Fermat} with exponent $p=11$ over fields of degree at most 5.
We can thus suppose that $n=4,6,9$ or $n=p\geq 13$ is a prime. \\

Throughout, unless otherwise specified, let $K=\Q(\sqrt{2},\sqrt{3})$ and let 
$\OO_{K}$ denote its ring of integers.
Let $p\geq 13$ be a prime. 
Suppose there is a non-trivial solution $(a,b,c)$ to \eqref{Fermat} over $K$ with exponent $p$. Let $E$ be the Frey curve \eqref{eqn:Frey} associated to this solution. 
By Theorem \ref{Box}, $E$ is modular. 
We note that $K$ has class number 1 and thus $\mathfrak{m}=1\cdot \mathcal{O}_{K}$. Suppose $\fq\mid p$ is a prime ideal of $K$. 
By Lemma \ref{semistablelem}, assumptions $i)$ and $ii)$ of Theorem \ref{levellowering} are satisfied for $\fq$. In particular, $E$ is semistable away from 2. 
Thus in order to prove Theorem \ref{mainthm}, it remains to:
\begin{enumerate}
	\item Determine the reduction type of $E$ at $\fP$, where $\fP$ 
    is the unique prime above $2$.
	\item Prove that $\modpg$ is irreducible, for $p\geq 13$.
	\item Eliminate the Hilbert newforms arising as a result of level lowering (Theorem \ref{levellowering}).
	\item Rule out solutions to \eqref{Fermat} for $n=4, 6$ and 9.
\end{enumerate}

\section{Computing the lowered level}\label{sec:LL}

Write $\cN_\varepsilon$ for the conductor of the Frey curve $E$ above.
We note that $2\mathcal{O}_{K}=\mathfrak{\fP}^{4}$, and $\OO_{K}/\fP=\mathds{F}_{2}$.
Thus $\fP$ divides exactly one of $a, b, c$, since $\gcd(a,b,c)=1$. 
Without loss of generality, we suppose $\fP\mid b$.

\begin{lem}\label{lem:poss}
Suppose that either $p\geq 17$ or $p=13$ and $\ord_{\fP}(b)\geq 2$.
There is some $\varepsilon \in \OO_K^*$ such that 
one of the following holds.
\begin{enumerate}[(i)]
\item Either $E$ has multiplicative reduction at $\fP$,
\item or $E$ has additive potentially multiplicative
reduction at $\fP$, and $\ord_{\fP}(\cN_\varepsilon)=4$.
\end{enumerate}
\end{lem}
\begin{proof}
Write $c_4$, $c_6$, $\Delta$ and $j$ for the usual
invariants attached to the model \eqref{eqn:Frey}.
A straightforward computation shows that
\[
c_4= \varepsilon^2 \cdot 16 \cdot (c^{2p}-a^p b^p), \qquad 
\Delta=\varepsilon^6 \cdot 16 \cdot (abc)^{2p},
\qquad j=c_4^3/\Delta.
\]
We recall that $\fP\mid b$.
Write $t=\ord_\fP(b)$. Then, 
\begin{equation}\label{eqn:j}
\ord_\fP(j)=3 \ord_\fP(c_4)-\ord_\fP(\Delta) = 32- 2 p t.
\end{equation}
Under the assumptions of the lemma, we have $\ord_\fP(j)<0$, thus we have
potentially multiplicative reduction at $\fP$ (irrespective of the choice of $\varepsilon$).

The rest of the lemma 
is a consequence of \cite[Lemma 4.4]{FLTsmall}. We give some
of the details. Let
\[
\fb = \fP^{2 \ord_\fP(2)+1} =\fP^9.
\]
Consider the natural map 
\[
\Phi : \OO_K^* \rightarrow (\OO_K/\fb)^*/((\OO_K/\fb)^*)^2.
\]
By an explicit computation in \texttt{Magma}, we find that the image of $\Phi$
has index $2$ in the codomain, and that $\lambda_1=1$
and $\lambda_2=-1+2 \mu$ are elements of $\OO_K$ which represent
the co-kernel, where $\mu=\sqrt{2}+\sqrt{3}$. 
Let $n_i=\ord_\fP(\Delta(L_i/K))$
where $L_i=K(\sqrt{\lambda_i})$ and $\Delta(L_i/K)$
is the relative discriminant ideal for the extension $L_i/K$.
We find that $n_1=0$ and $n_2=2$. 
By the aforementioned lemma, there is a unit $\varepsilon \in \OO_K^*$ such that
$\ord_\fP(\cN_\varepsilon)=1$ or $4$.
\end{proof}

In Lemma \ref{lem:poss}, we determined the conductor of the Frey curve $E$ for all primes $p\geq 17$,
and a suitable choice of $\varepsilon \in \OO_K^*$.
In particular, we prove that $E$ either has multiplicative reduction 
or additive potentially multiplicative reduction at $\fP$. 
This proof fails for $p=13$ in the case that $\ord_{\fP}(b)=1$, and we treat this case separately in the rest of the section.

\begin{lem}\label{lem:13surj}
Suppose $p=13$ and $\ord_{\fP}(b)=1$.
Then there is a unit $\varepsilon\in\OO_{K}^{\ast}$ and $\alpha\in\OO_{K}$ such that
	\[
	\fP^6\mid (\varepsilon b^{13}-\varepsilon a^{13}-\alpha^2),
	\]
	where $\fP\nmid\alpha$.
\end{lem}

\begin{proof}
Let
\[
\theta: \OO_{K}^{\ast}\rightarrow U/U^2,
\]
where $U=\big(\OO_{K}/\fP^6\big)^\ast$. 
We checked that $\theta$ is surjective using a straightforward computation in \texttt{Magma}.
Let $\beta=b^{13}-a^{13}$.
Note that $\fP\nmid\beta$.
As $\theta$ is surjective, there is some $\gamma \in \OO_K^*$
such that $\theta(\gamma)=\beta U^2$. 
Thus $\beta\equiv\gamma\alpha^2 \pmod{\fP^6}$ for some 
$\alpha\in \OO_{K}\setminus \fP$.
Let $\varepsilon=\gamma^{-1}\in\OO_{K}^{\ast}$.
Then $\varepsilon\beta\equiv \alpha^2 \pmod{\fP^6}$,
which proves the lemma.
\end{proof}

Let $\varepsilon\in\OO_K^{\ast}$ be as in Lemma \ref{lem:13surj}.
We begin by working with the Frey curve 
\begin{equation}\label{eq:Frey13}
	E_{13,\; \varepsilon}: y^2 = x(x-\varepsilon a^{13})(x+\varepsilon b^{13}).
\end{equation}
We recall that, by Lemma \ref{semistablelem}, $E_{13,\; \varepsilon}$ is 
semistable away from $\fP$. Thus in order to determine the conductor of $E_{13,\; \varepsilon}$, it remains to determine the reduction type of $E_{13,\; \varepsilon}$ at $\fP$.

\begin{lem}\label{lem:13}
Suppose $\ord_{\fP}(b)=1$. The Frey curve $E_{13,\; \varepsilon}$ has additive potentially good reduction at $\fP$. Moreover $\ord_\fP(\mathcal{N})=5$, where $\mathcal{N}$ is
 the conductor of $E_{13,\; \varepsilon}$.
\end{lem}

\begin{proof}
Let $\alpha\in\OO_{K}$ be as in Lemma \ref{lem:13surj}.
Recall that $K$ has class number $1$ and therefore
every ideal is principal. Let $\pi$ be an generator
for $\fP$. For example, we can take
\[
\pi = \frac{\mu^3 + \mu^2 - 9\mu - 9}{4},
\]
where $\mu=\sqrt{2}+\sqrt{3}$.
We make the substitutions
\[
	x\mapsto \pi^6 x,\qquad y\mapsto\alpha\pi^6 x+\pi^9 y.
\]
This yields the model
\[
	W: y^2+\frac{2\alpha }{\pi^3} xy= x^3 + \dfrac{(\varepsilon b^{13}- \varepsilon a^{13}-\alpha^2)}{\pi^6}x^2-\dfrac{\varepsilon^2 a^{13}b^{13}}{\pi^{12}}x
\]
which is integral by Lemma~\ref{lem:13surj}, and has discriminant
\[
\Delta(W)=\frac{\Delta(E_{13,\; \varepsilon})}{\pi^{36}}= \frac{16\varepsilon^6 a^{26}{b^{26}}c^{26}}{\pi^{36}}.
\]
Note that $\ord_\fP(\Delta(W))=6$. 
Thus $W$ is minimal at $\fP$.
%and as $E_{13,\; \varepsilon}$ is semistable at all primes $\ne \fP$,
%we see that $W$ is a global minimal model.
We use 
Tate's algorithm \cite[Pages 364-368]{SilvermanAdvanced} to compute
the valuation of the conductor for $W$.
Let $a_1,\dotsc,a_6$ be the usual $a$-invariants for $W$ given in the above model, and let $b_2,\dotsc,b_8$ be the corresponding $b$-invariants:
\[
	b_{2}=\frac{4 \alpha^2}{\pi^6},\quad b_{4}=-\dfrac{2 \varepsilon^2 a^{13}b^{13}}{\pi^{12}},\quad b_{6}=0,\quad b_{8}=-\dfrac{\varepsilon^4 a^{26}b^{26}}{\pi^{24}}.
\]
In particular, $\fP \mid a_3$, $a_4$, $b_2$, and $\fP^2 \mid a_6$
and $\ord_{\fP}(b_{8})=2$. 
Thus, by Step 4 of Tate's algorithm the reduction type for $W$ at $\fP$ is III
and the valuation of the conductor at $\fP$ is
\[
\ord_{\fP}(\mathcal{N})=\ord_{\fP}(\Delta(W))-1=5.
\]
\end{proof}

\section{Proving irreducibility of $\modpg$}\label{sec:irred}

In this section, we prove that $\modpg$ is irreducible for $p\geq 13$.
In particular, we show that a possible consequence of $\modpg$ being reducible
is that $E$ has a $K$-rational point of order $p$. 
In this instance, by work of Derickx, Kamienny, Stein and Stoll \cite[Theorem 1.2]{DKSS}, we obtain a contradiction if 
$p\geq 19$. We thus treat the primes $p=13$ and $17$ separately.\\

Since the Frey curve $E$ has full 2-torsion over $K$, 
it is sufficient to show that there are no non-cuspidal $K$-rational points on one of the modular curves $X_{0}(p), X_{0}(2p)$ or $X_{0}(4p)$.
We find it convenient to work with the modular curves $X_{0}(26)$ and $X_{0}(34)$.
In particular, we show that $X_{0}(26)(K)=X_{0}(26)(\Q)$ and $X_{0}(34)(K)=X_{0}(34)(\Q)$.
All points in $X_{0}(26)(\Q)$ and $X_{0}(34)(\Q)$ are cuspidal thus proving the irreducibility of $\modpg$ for $p=13$ and $17$.

\subsection{$p=13$} 
Using the explicit modular parametrisation, we prove that if $P\in X_{0}(26)(K)$ then either $P\in X_{0}(26)(\Q(\sqrt{3}))$ or that $C(\Q(\sqrt{3}))$ is non-empty, where $C$ is a genus 2 hyperelliptic curve.
In the first case, by work of Bruin and Najman \cite{Bruin2015}, $P\in X_{0}(26)(\Q)$. In the second case, we show that $C(\Q(\sqrt{3}))$ is empty.

\begin{lem}
$\bar{\rho}_{E, 13}$ is irreducible.
\end{lem}

\begin{proof}
	We prove that $X_{0}(26)(K)=X_{0}(26)(\Q)$. 
	We work with the model
\begin{equation}
 \label{X026}
    X_{0}(26): y^2 = x^6 -8x^5 + 8x^4 - 18x^3 + 8x^2 -8x +1
\end{equation}
given in \texttt{Magma}. 
Let
\begin{equation*}
    E': y^{2} + xy + y = x^{3} - x^{2} - 3x + 3.
\end{equation*}
Then $E'$ is the elliptic curve with Cremona label \texttt{26b1}. 
%Consider the automorphism $w$ of $X_{0}(26)$ that maps $(x,y,z)$ to $(z,-y,x)$. Taking the quotient of $X_{0}(26)$ by $w$ yields the elliptic curve $E'$. 
Suppose $P=(a,b)\in X_{0}(26)(K)$. 
Note that if $a=1$ then $b^2=-16$, i.e., $P\notin X_{0}(26)(K)$. 
Suppose from now on that $a\neq 1$. 
Using \texttt{Magma}, we find the explicit parametrisation
\begin{align*}
    \pi: X_{0}(26)&\longrightarrow E'\\
    (a,b)&\longmapsto \left(-\dfrac{(a+1)^2}{(a-1)^2},\; \dfrac{-2b+2a(a-1)}{(a-1)^3}\right).
\end{align*}
Let $L=\Q(\sqrt{3})$. 
We checked using \texttt{Magma} that $E'(K)=E'(L)$. 
It immediately follows that
\begin{equation*}
 % \label{sqrsqrt3}
    \left(\dfrac{a+1}{a-1}\right)^2\in L.
\end{equation*}
%Note that if $a=-1$ then $P=(-1,\; 2\sqrt{13})\not\in X_{0}(26)(K)$.
Let 
\[
\sigma: K\rightarrow K,\quad \sigma(\sqrt{2})=-\sqrt{2},\quad \sigma(\sqrt{3})=\sqrt{3}.
\]
Then 
\[
	\sigma\left(\dfrac{a+1}{a-1}\right)=\dfrac{a+1}{a-1}\quad\text{ or }\quad 
	\sigma\left(\dfrac{a+1}{a-1}\right)=-\dfrac{a+1}{a-1}.
\]
Thus there are two cases to consider:
\begin{enumerate}
	\item[(1)] $(a+1)/(a-1)\in L$;
	\item[(2)] $(a+1)/(a-1)\in\; \sqrt{2}\cdot L$. 
\end{enumerate}

\noindent \textbf{Case (1)} In this case $a\in L$, and it immediately follows from 
the parameterisation of $\pi$ that $b\in L$. 
%Let $X: y^2=f(x)$ be a hyperelliptic curve defined over $\Q$. 
Observe that $X_{0}(26)$ has infinitely many quadratic points of the form 
$(r, \sqrt{f(r)})$, where $r\in\Q$. 
Such points are called \textbf{non-exceptional} 
and all other quadratic points are called \textbf{exceptional}. 

\noindent \textbf{Case (1.1)} 
If $a\in L\setminus \Q$ then $P$ is an exceptional 
quadratic point on $X_{0}(26)$. Bruin and Najman \cite[Table 3]{Bruin2015} have given an explicit
description of all quadratic points on $X_{0}(26)$. 
They find that all exceptional quadratic points are 
defined over $\Q(\sqrt{d})$ for $d=-1, -3, -11$ and $-23$. 

\noindent \textbf{Case (1.2)}
If $a\in\Q$ then $b^2\in\Q$. 
Then $P$ is a non-exceptional quadratic point on 
$X_{0}(26)$ defined over $L$. 
Moreover $P$ corresponds to a rational point 
on the quadratic twist of $X_{0}(26)$ over $\Q(\sqrt{3})$. 
We denote this quadratic twist by $X_{3}$. 
We checked using \texttt{Magma} that $X_{3}$ has 
no points defined over $\Q_{3}$. 
Thus $X_{3}(\Q)$ is empty.
%The \texttt{Magma} command \texttt{TwoCoverDescent} 
%shows that the so called fake $2$-Selmer set of $X_{3}$ 
%is empty. Therefore $X_{3}(\Q)$ is empty by Bruin--Stoll \cite{BruinStoll}. 

\noindent \textbf{Case (2)} In this case we have 
$(a+1)/(a-1)\in\sqrt{2}\cdot L$, i.e.,
\begin{equation}
 \label{alphafrac}
    \dfrac{a+1}{a-1}=\sqrt{2}\alpha,\; \text{ for some } \alpha\in L.
\end{equation}
Note the following identity:
\begin{equation}
  \label{sqrt3idnty}
    \left(\dfrac{a+1}{a-1}\right)^2 -1=\dfrac{(a+1)^2-(a-1)^2}{(a-1)^2}=\dfrac{4a}{(a-1)^2}=\dfrac{4a(a-1)}{(a-1)^3}.
\end{equation}
From the parametrisation of $\pi$ and \eqref{sqrt3idnty}, we see that
\begin{equation*}
    \dfrac{b}{(a-1)^3}\in L.
\end{equation*}
Note the following identity
\begin{multline}
 \label{a-1div}
    16\left(\dfrac{a^6-8a^5+8a^4-18a^3+8a^2-8a+1}{(a-1)^6}\right)
    \\=-4\left(\dfrac{a+1}{a-1}\right)^6-3\left(\dfrac{a+1}{a-1}\right)^4+10\left(\dfrac{a+1}{a-1}\right)^2+13.
\end{multline}
By combining \eqref{X026} and \eqref{a-1div}, we obtain
\begin{equation*}
    \left(\frac{4b}{(a-1)^3}\right)^2=-4\left(\dfrac{a+1}{a-1}\right)^6-3\left(\dfrac{a+1}{a-1}\right)^4+10\left(\dfrac{a+1}{a-1}\right)^2+13.
\end{equation*}
After making the substitutions $\beta=4b/(a-1)^3$ and \eqref{alphafrac}, we obtain 
\begin{equation*}
    \beta^2=-32\alpha^6-12\alpha^4+20\alpha^2+13.
\end{equation*}
Thus $(\alpha, \beta)$ is a $L$-rational point on the curve
\begin{equation*}
C: y^2=-32x^6-12x^4+20x^2+13.
\end{equation*}
%Using the \texttt{TwoCoverDescent} command in \texttt{Magma}, we find that the fake 2-Selmer set of $C$ over $L$ is empty. 
%Thus by work of Bruin and Stoll \cite{BruinStoll}, $C(L)$ is empty.
Write $\mathcal{O}_{L}$ for the ring of integers 
of $L$. Then $13\mathcal{O}_{L}=\mathfrak{p}_{1}\mathfrak{p}_{2}$. 
We checked using \texttt{Magma} that there there are no points on $C$ defined over the completion of 
$L$ at $\mathfrak{p}_{1}$. Thus $C(L)$ is empty.
\end{proof}

\subsection{$p=17$} Using the explicit modular parametrisation, we show that if $P\in X_{0}(34)(K)$ then either $P\in X_{0}(34)(\Q(\sqrt{2}))$ or that $C(\Q(\sqrt{2}))$ is non-empty, where $C$ is the quadratic twist of $X_{0}(34)$ over $\Q(\sqrt{3})$. 
In the first case, by Ozman and Siksek \cite{ozman2018quadratic}, $P\in X_{0}(34)(\Q)$. 
In the second case, we show that $C(\Q(\sqrt{2}))$ is empty.

\begin{lem}
$\bar{\rho}_{E, 17}$ is irreducible.
\end{lem}

\begin{proof}
We prove that $X_{0}(34)(K)=X_{0}(34)(\Q)$. 
We work with the model
\begin{equation}
  \label{X034}
    X_{0}(34):x^4-y^4+x^3+3xy^2-2x^2+x+1=0
\end{equation}
given in \texttt{Magma}.
Making the change of variables $x\mapsto x,\; y\mapsto y^2$ yields the curve
\begin{equation*}
    C': x^4-y^2+x^3+3xy-2x^2+x+1=0.
\end{equation*}
Since $C'$ has genus 1, we can transform it to an elliptic curve
\begin{equation}
      \label{X034map}
        C' \rightarrow E^{\prime},\qquad (x,y) \mapsto (2(x^2-2x+y),\; 4x(x^2-2x+y))
\end{equation}
where
\begin{equation*}
    E': y^2+xy+2y=x^3-4x
\end{equation*}
is the elliptic curve with Cremona label \texttt{34a1}. We deduce the explicit modular parametrisation
\begin{align*}
	 \pi: X_{0}(34) &\longrightarrow E'\\
	  (x,y) &\longmapsto (2(x^2-2x+y^2), 4x(x^2-2x+y^2)).
\end{align*}
Let $L=\Q(\sqrt{2})$. 
Using \texttt{Magma} we find that $E'(K)=E'(L)$. 
Suppose $P=(a,b)\in X_{0}(34)(K)$. 
Since
\begin{equation*}
    2(a^2-2a+b^2),\qquad 4a(a^2-2a+b^2)\in L,
\end{equation*}
it follows that either $a^2-2a+b^2=0$ or $a\in L$. Suppose the former is true, i.e., 
\begin{equation}
 \label{X034nonzero}
    b^2=2a-a^2.
\end{equation}
We substitute \eqref{X034nonzero} into \eqref{X034} to find that
\begin{equation*}
    2a^3+a+1=0,
\end{equation*}
and $a\not\in K$. 
Thus, $a\in L$ and hence $b^2\in L$. 
Either $b\in L$ or $b=\sqrt{3}\beta$ for some $\beta\in L$. 
If $b\in L$ then $P\in X_{0}(34)(L)$. 
Ozman and Siksek \cite{ozman2018quadratic} have determined the quadratic points on $X_{0}(34)$, and they found that there are no real quadratic points on $X_{0}(34)$. Thus $P\in X_{0}(34)(\Q)$.

Suppose $b=\sqrt{3}\beta$ for some $\beta\in L$. Thus, $(a,\beta)$ is an $L$-rational point on the curve
\begin{equation*}
 \label{X034twist}
    C:\; x^4-9y^4+x^{3}+9xy^2-2x^2+x+1=0,
\end{equation*}
where $C$ is the quadratic twist of $X_{0}(34)$ over $\Q(\sqrt{3})$. 
Note that 3 is inert in $L$.
We checked using \texttt{Magma} that there there are no points on $C$ defined over the completion of $L$ at $3\OO_{L}$. Thus $C(L)$ is empty.
\end{proof}

\subsection{$p\geq 19$}

We let $E=E_{a,b,c,\varepsilon}$ where $\varepsilon \in \OO_K^*$ is chosen so that one of the two possibilities in Lemma~\ref{lem:poss} hold. 
Suppose $\overline{\rho}_{E,p}$ is reducible. Then
\[
\overline{\rho}_{E,p} \sim \begin{pmatrix}
\theta & * \\
0 & \theta^\prime
\end{pmatrix}
\]
where $\theta$, $\theta^\prime$ are characters $G_K \rightarrow \F_p^*$. 
Recall that $\chi_p=\det(\overline{\rho}_{E,p})=\theta \theta^\prime$
where $\chi_p$ denotes the mod $p$ cyclotomic character. 
We let $\cN_{\theta}$ and $\cN_{\theta^\prime}$ denote the conductors of $\theta$ and $\theta^\prime$, respectively. 
We shall require the following result of Freitas and Siksek \cite[Lemma 6.3]{FLTsmall}.

\begin{lem}[Freitas and Siksek]\label{lem:conductor}
    Let $E$ be an elliptic curve defined over a number field $K$ with conductor $\cN$.
    Let $p\geq 5$ be a prime, and let $\mathfrak{q}\nmid p$ be a prime.
    Let $\theta$ and $\theta^\prime$ be as above.
    If $\modpg$ is reducible then 
    \[
\ord_{\mathfrak{q}}(\cN_{\theta})=\ord_{\mathfrak{q}}(\cN_{\theta^\prime})=\begin{cases}
0 & \text{if $E$ has good or multiplicative reduction at $\mathfrak{q}$}\\
\dfrac{\ord_{\mathfrak{q}}(\cN)}{2}\in\Z & \text{if $E$ has additive reduction at $\mathfrak{q}$}
\end{cases}
\]
\end{lem}

\begin{lem} 
Let $p \ge 19$. Then $\overline{\rho}_{E,p}$ is irreducible.
\end{lem}
\begin{proof}

Suppose $\modpg$ is reducible. 
Since $p$ is unramified in $K$, and $E$ has good
or multiplicative reduction at $\fp \mid p$, we have that for any $\fp \mid p$, precisely one of $\theta$, $\theta^\prime$
is ramified at $\fp$ (see Kraus \cite[Lemma 1]{KRAUSQUADRATIC}). 

Let us first suppose that either of $\theta$, $\theta^\prime$ is unramified at all $\fp \mid p$ (and thus the other is ramified at all $\fp \mid p$). 
We note that replacing $E$ by a $p$-isogenous
elliptic curve, if necessary, allows us to swap $\theta$ and $\theta^\prime$. Thus we may suppose that $\theta$ is unramified at all the primes above $p$ and hence $\theta$ is unramified away from $\fP$.

We shall use Lemma \ref{lem:conductor} to determine $\cN_{\theta}$. 
Suppose we are in case (i) of 
Lemma \ref{lem:poss}, and $E$ has multiplicative reduction at $\fP$.
Then by Lemma \ref{lem:conductor}, we
have 
$\ord_\fP(\cN_{\theta^\prime})
=\ord_\fP(\cN_{\theta})=0$.
Suppose now that we are in case (ii)
of Lemma \ref{lem:poss}, and $E$ has additive reduction at $\fP$.
Then by Lemma \ref{lem:conductor}, we have
\[
\ord_{\fP}(\cN_\theta)=\ord_{\fp}(\cN_{\theta^\prime})=\frac{1}{2} \ord_\fP(\cN_\varepsilon)=2.
\]
Hence either $\cN_\theta=1$ or $\fP^2$. 
Let $\infty_1,\dotsc,\infty_4$
denote the four real places of $K$. Then $\theta$ is a character
for the ray class group of the modulus $\infty_1 \cdots \infty_4$
in the first case, and of the modulus $\fP^2 \cdot \infty_1 \cdots \infty_4$
in the second case. Using \texttt{Magma} we find that
this ray class group is $\Z/2\Z$ in either case. Thus, the order of $\theta$ divides 2, and $\theta$ is either trivial or a quadratic character. In the first case, when $\theta$ is trivial, $E$ has a $K$-rational point of order $p$.
In the second case, let $E^\prime$ be the quadratic twist of
$E$ by $\theta$. 
Then 
\[
\overline{\rho}_{E^\prime,p} \sim \begin{pmatrix}
\theta^2 & * \\
0 & \theta\theta^\prime
\end{pmatrix}
=
\begin{pmatrix}
1 & * \\
0 & \chi_{p}
\end{pmatrix}.
\]
Thus $E^\prime$ has a $K$-rational point of order $p$.
In both cases, we obtain an elliptic curve with a point of order $p$ defined over $K$ (a quartic field). By Derickx, Kamienny, Stein and Stoll \cite[Theorem 1.2]{DKSS}, $p\leq 17$. We obtain a contradiction since $p\geq 19$.

Fix $\fp_0$ to be a prime ideal of $\OO_K$ above $p$. Let $G=\Gal(K/\Q)$.
Then $G$ acts transitively on the primes $\fp \mid p$. Let $S$ be
the set of $\tau \in G$ such that $\theta$ is ramified at $\tau(\fp_0)$. 
We know from the above that $S$ is a proper subset of $G$, i.e., $S \ne \emptyset$ and $S \ne G$.
For a prime ideal $\fq$ of $\OO_K$ we write $I_\fq$ 
for an inertia subgroup of $G_K$ corresponding to $\fq$.
Thus $\theta \vert_{I_\fq}=1$ for all 
\[
\fq \notin \{\fP\} \cup \{\tau(\fp_0) : \tau \in S\}.
\]
By Lemma \ref{lem:poss}, $E$ has potentially multiplicative reduction
at $\fP$. By the theory of the Tate curve \cite[Proposition 1.2]{DAVID2012}
$\theta^2 \vert_{I_\fP}=1$. Let $\phi=\theta^2$. Then
$\phi \vert_{I_\fq}=1$ for all 
\[
\fq \notin \{ \tau(\fp_0) : \tau \in S\}.
\]
Recall that $\theta^\prime$ is unramified at $\fq \in \{ \tau(\fp_0) : \tau \in S\}$. Since $\theta \theta^\prime=\chi_p$, we conclude that
\begin{equation}\label{eqn:phi}
\phi \vert_{I_\fq} =
\begin{cases}
\chi_p^2 \vert_{I_\fq} & \fq \in \{ \tau(\fp_0) : \tau \in S\} \\
1 & \text{otherwise}.
\end{cases}
\end{equation}

Let $u \in \OO_K^*$. 
We define the twisted norm of $u$ attached to $S$ to be
\[
\fN_S(u) = \prod_{\tau \in S} (\tau(u))^2.
\]
By the proof of \cite[Proposition 2.6]{DAVID2012}, the existence of $\phi$ satisfying \eqref{eqn:phi}
ensures that
\[
\fp_0 \mid (\fN_S(u)-1).
\]
Let $\mu=\sqrt{2}+\sqrt{3}$, and let
\[
u_1=\mu,\; u_2=(-\mu^3 + 9\mu + 2)/2,\; u_3=(\mu^3 - \mu^2 - 9\mu + 5)/4;
\]
this is a basis for $\OO_K^*/\{\pm 1\}$.
Then, $p \mid B_S$ where
\[
B_S= \Norm \left(\sum_{i=1}^3 (\fN_S(u_i)-1) \cdot \OO_K \right).
\]
We used \texttt{Magma} to compute $B_S$ for all non-empty proper subsets $S$ of $G=\Gal(K/\Q)$.
In all cases we found that if $p\mid B_S$ then $p=2$ or $3$.
Thus we obtain a contradiction.
\end{proof}

\section{Eliminating Hilbert newforms}
Let
\[
\cN_0=\begin{cases}
\fP & \text{if we are in case (i) of Lemma~\ref{lem:poss}}\\
\fP^4 & \text{if we are in case (ii) of Lemma~\ref{lem:poss}}\\
\fP^5 & \text{if } p=13 \text{ and } \ord_{\fP}(b)=1.
\end{cases}
\]
Applying level lowering (i.e. Theorem \ref{levellowering}) we obtain
\[
\overline{\rho}_{E,p} \sim \overline{\rho}_{\ff,\fp}
\]
where $\ff$ is a Hilbert newform of parallel weight $2$
and level $\cN_0$, and $\fp$ is some prime above $p$
in $\Q_\ff$, the Hecke eigenvalue field of $\ff$.
Using \texttt{Magma} we find that there are no newforms
with parallel weight $2$ and level $\fP$ or level $\fP^5$, obtaining
a contradiction in these cases.\\

We thus suppose we are in case (ii) of Lemma~\ref{lem:poss}. 
For the level $\fP^4$
we find that there are two newforms $\ff_1$, $\ff_2$
and for both the corresponding Hecke eigenvalue field
is $\Q$. Let $E_1/K$, $E_2/K$ be the following elliptic curves:
\[
E_1: y^2 + (\mu + 1)xy = x^3 + \dfrac{1}{4}(-\mu^3 - \mu^2 - 3\mu + 
    5)x^2 + \dfrac{1}{2}(-\mu^3 - 5\mu)x + \dfrac{1}{4}(\mu^3 + 7\mu^2 - 9\mu - 3)
\]
\[
E_2: y^2 + \dfrac{1}{4}(\mu^3 + \mu^2 + 3\mu + 3)y = x^3 + 
\dfrac{1}{2}(-\mu^2 - 1)x^2 + \mu^2x + \dfrac{1}{4}(-3\mu^3 - 17\mu^2 - \mu + 1),
\]
where $\mu=\sqrt{2}+\sqrt{3}$.
These elliptic curves have conductors $\fP^4$ and were found using the \texttt{Magma}
command \texttt{EllipticCurveSearch}. These are non-isogenous
as $a_\fq(E_1)=6$ and $a_\fq(E_2)=-6$ where $3\mathcal{O}_{K}=\fq^2$. 
By the work of Box \cite{box2021elliptic}, $E_1$, $E_2$ are modular and thus
correspond to the two Hilbert newforms
$\ff_1$, $\ff_2$ of parallel weight $2$ and level $\fP^4$.
Thus $\overline{\rho}_{E,p} \sim \overline{\rho}_{E_i,p}$
where $i=1$ or $2$. To obtain a contradiction we shall use
a standard image of inertia argument (see \cite[Lemma 3.5]{FLTbig}).

Let $j$ be the $j$-invariant of the Frey curve $E$.
By \eqref{eqn:j} we have $\ord_{\fP}(j)<0$ and $p \nmid \ord_{\fP}(j)$.
Thus, $p \mid \# \overline{\rho}_{E,p}(I_\fP)$ \cite[Proposition 6.1, Chapter 5]{SilvermanAdvanced}.
However, we find that $E_1$, $E_2$ have $j$-invariants
\[
	j_1=0\quad \text{ and }\quad j_2=-853632\mu^3 + 7682688\mu + 2417472,
\]
respectively. 
As $\ord_{\fP}(j_i) \ge 0$, we have that $E_1$, $E_2$ have potentially good reduction at $\fP$.
It follows that $\# \overline{\rho}_{E_i,p}(I_\fP) \mid 24$ from the work of Kraus \cite[Introduction]{Kraus1990}. 
As $\overline{\rho}_{E,p} \sim \overline{\rho}_{E_i,p}$, for $i=1$ or $2$, we obtain $p \mid 24$ giving a contradiction.

\section{Small Integer Exponents}\label{sec:SmExp}

We have thus far shown that there are no solutions to~\eqref{Fermat} over $K$ for prime $p\geq 5$. 
In order to complete the proof of Theorem~\ref{mainthm}, it remains to rule out solutions to~\eqref{Fermat}
for $n= 4, 6, 9$. 
We note in passing that there are non-trivial solutions to the Fermat cubic over $\Q(\sqrt{2})$ (see \cite[page 184]{JarvisMeekin}).

\subsection{$n=9$}
We are very grateful to Samir Siksek for the lengthy discussions and ideas that resulted in this proof. 
We first convert the problem of finding $K$-points on the Fermat curve of degree 9 to finding the $\Q(\sqrt{3})$-points 
on a certain hyperelliptic curve $C$. We then study the Jacobian of $C$ to show that $C(\Q(\sqrt{3}))=\{\infty\}$, 
where $\infty$ denotes the point at infinity on $C$.

\begin{thm}\label{thm:F9}
There are no non-trivial solutions to \eqref{Fermat} over $K$ for $n=9$.
\end{thm}
We find it convenient to let
\[
F_9 \; : \; x^9+y^9+z^9=0.
\]
That is, $F_{9}$ is the Fermat curve of degree 9. 
We recall that $2\mathcal{O}_{K}=\fP^4$ and that $K$ has class number $1$. 
We will prove that $F_{9}(K)=\{ (1:-1:0),\; (1:0:-1), \; (0:1:-1)\}$, i.e, $F_{9}(K)$ consists only of trivial solutions. 
Suppose $(\alpha: \beta: \gamma)\in F_{9}(K)$ is a non-trivial solution. 
We may suppose that $\alpha$, $\beta$, $\gamma \in \OO_K$ and that they are coprime. We recall that $\OO_K/\fP=\F_2$ and
\[
F_9(\F_2)=\{ (1:1:0),\; (1:0:1), \; (0:1:1)\}.
\]
Hence, by permuting $\alpha$, $\beta$, $\gamma$ appropriately, we may suppose $(\alpha:\beta:\gamma) \equiv (1:1:0) \pmod{\fP}$. 
Thus
\begin{equation}\label{eqn:fp}
\fP \mid \gamma, \qquad \fP \nmid \alpha \beta.
\end{equation}
Observe 
\[
\gamma^{18}-(\alpha^9-\beta^9)^2 \; = \; (\alpha^9+\beta^9)^2-(\alpha^9-\beta^9)^2 \; =\; 4(\alpha\beta)^9.
\]
After making the substitutions
\begin{equation}\label{eqn:uv}
u=\frac{\alpha \beta}{\gamma^2}, \qquad v=\frac{\alpha^9-\beta^9}{\gamma^9},
\end{equation}
we see that $Q_1=(u,v) \in C_1(K)$ where
\[
C_1 \; : \; y^2=-4x^9+1.
\]
Let
\[
E_1 \; : \; y^2=4x^3+1.
\]
This is an elliptic curve. Let 
\[
\pi_{1} : C_1 \rightarrow E_1, \qquad (x,y) \mapsto (-x^3,y).
\]
The elliptic curve $E_1$ has minimal Weierstrass model
\[
E_1^\prime \; : \; z^2+z=x^3
\]
which is obtained from $E_1$ by the substitution $y=2z+1$. This has Cremona label \texttt{27a3}. In particular $E_1^\prime$ has good reduction away from $3$. Let $R_1=\pi_{1}(Q_1)=(-u^3,v) \in E_1(K)$. Then $R_1$ corresponds to the point
\[
S_1=(-u^3,(v-1)/2) \in E_1^\prime(K).
\]
Let $\sigma : K \rightarrow K$ be the automorphism satisfying
\[
\sigma(\sqrt{2})=-\sqrt{2}, \qquad \sigma(\sqrt{3})=\sqrt{3}.
\]
We note that the fixed field of $\sigma$ is $L=\Q(\sqrt{3})$. Thus $S_1+S_1^\sigma \in E_1^\prime(L)$.
We checked using \texttt{Magma} that $E_1^\prime$ has rank $0$ over $L$,
and indeed
\begin{equation}\label{eqn:EdL}
E_1^\prime(L) \; =\; \{\mathcal{O}, (0,0), (0,-1)\} \; \cong \; \Z/3\Z.
\end{equation}
Thus $S_1+S_1^\sigma$ is one of these three points.
However, $\ord_\fP(u) <0$ by \eqref{eqn:fp} and \eqref{eqn:uv}. 
It follows that
\[
S_1 \equiv \mathcal{O} \pmod{\fP}.
\]
Hence
\[
S_1^\sigma \equiv \mathcal{O}^\sigma = \mathcal{O} \pmod{\fP^\sigma}.
\]
However, $\fP$ is a totally ramified prime, so $\fP^\sigma=\fP$. Thus $S_1^\sigma \equiv \mathcal{O} \pmod{\fP}$, and
\[
S_1+S_1^\sigma \equiv \mathcal{O} \pmod{\fP}.
\]
By \eqref{eqn:EdL} and the injectivity of torsion
upon reduction modulo primes of good reduction (see \cite[Appendix]{Katz}), we conclude that
\[
S_1+S_1^\sigma=\mathcal{O}.
\]
Hence
\[
R_1+R_1^\sigma=\mathcal{O}.
\]
Hence
\[
(-u^3)^\sigma = -u^3, \qquad v^\sigma=-v.
\]
As the only cube root of $1$ in $K$ is $1$, we have $u^\sigma=u$ and so $u \in L$. 
Moreover, $v^2 = -4 u^9+1 \in L$ and $v^\sigma=-v$, so $v=w/\sqrt{2}$ where $w \in L$. 
Hence $(u,w) \in C(L)$ where
\[
C \; : \; y^2=2(-4x^9+1). 
\]
\begin{lem}\label{lem:CL}
$C(L)=\{ \infty\}$.
\end{lem}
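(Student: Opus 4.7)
The plan is to prove the lemma by a Chabauty-type argument on the Jacobian $J$ of $C$, which has dimension $g=4$. The key structural input is the degree-$3$ cover
\[
\pi : C \longrightarrow E, \qquad (x,y) \longmapsto (x^3, y),
\]
where $E : Y^2 = -8 X^3 + 2$ is an elliptic curve. Over $\QQ$, the change of variable $X \mapsto -X/2$ identifies $E$ with $\tilde E : Y^2 = X^3 + 2$, so $E$ is an isogeny factor of $J$. The curve $\tilde E$ has rank $1$ over $\QQ$ with generator $(-1,1)$, and the $\sqrt{3}$-quadratic twist $Y^2 = X^3 + 54$ carries the point $(3,9)$, which one checks is of infinite order; hence $E(L)$ has rank at least $2$, with equality expected.

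The strategy is then to show $\mathrm{rank}\, J(L) < g = 4$ and apply Chabauty--Coleman. Since $\mathrm{rank}\, E(L) = 2$ already accounts for two units of rank in $J(L)$ via the pullback $\pi^{\ast}$, what remains is to control the rank coming from the complementary $3$-dimensional abelian subvariety $A$ (the Prym of $\pi$, up to isogeny). A $2$-descent in \texttt{Magma} --- either directly on $J$, or decomposed along the isogeny $J \sim E \times A$ --- should yield $\mathrm{rank}\, J(L) \leq 3$. Granted such a bound, Chabauty--Coleman at a prime $\mfq$ of $L$ of good reduction for $C$, combined with a Mordell--Weil sieve, pins down $C(L) = \{\infty\}$. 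To execute the sieve I would pick several split primes above rational primes like $11$ (where $\sqrt{3} \equiv 5$), reduce the chosen generators of $J(L)$, and intersect the resulting congruence conditions.

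The main obstacle is the rank computation for the four-dimensional Jacobian: if $2$-descent on $J$ directly is infeasible, one either exploits the isogeny decomposition above or falls back on the Bruin--Stoll fake $2$-descent performed on $C$ itself, which is tractable because $-8x^9 + 2 = -2(4x^9 - 1)$ with $4x^9 - 1$ irreducible over $L$ (as $(1/4)^{1/9}$ has degree $9$ over $L$). A secondary obstacle is that since $\mathrm{rank}\, E(L)$ sits at the boundary of elliptic Chabauty (equal to $[L:\QQ]$), a purely elliptic Chabauty argument on the image in $E$ cannot succeed, and one is genuinely forced to work with Chabauty--Coleman on the full Jacobian rather than on the elliptic quotient.
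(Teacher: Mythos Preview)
Your outline is reasonable in spirit but is not a proof: the two load-bearing steps---bounding $\mathrm{rank}\,J(L)$ by a $2$-descent over $\QQ(\sqrt{3})$ (or on the three-dimensional Prym), and then running Chabauty--Coleman plus a sieve on a genus-$4$ curve over a quadratic field---are stated as expectations (``should yield'', ``granted such a bound'') rather than carried out. Neither computation is routine in current software, and you give no indication of how you would actually execute either; in particular, explicit Chabauty--Coleman in genus $4$ over a quadratic field is well beyond what is implemented.

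The paper sidesteps both difficulties by a trick you have missed. For $P\in C(L)$ the class $[P+P^{\tau}-2\infty]$ is Galois-invariant and hence lies in $J(\QQ)$, whose rank is only $1$: a $2$-descent over $\QQ$ gives $2$-Selmer rank $1$, torsion is killed by reduction at $5$ and $13$, and $\pi^{*}$ of a generator of $E(\QQ)\cong\ZZ$ supplies a point of infinite order, so in fact $J(\QQ)=\ZZ\cdot\pi^{*}\cQ$. Writing $[P+P^{\tau}-2\infty]=\pi^{*}(n\cQ)$ and assuming $n\neq 0$, one obtains an effective degree-$3$ divisor $D=\pi^{*}(a,b)$ linearly equivalent to $P+P^{\tau}+\infty$; Riemann--Roch and Clifford's theorem force $\lvert D\rvert$ to be $g^{1}_{2}+\text{(base point)}$ on the hyperelliptic curve, so two of the three points of $\pi^{-1}(a,b)$ are swapped by the hyperelliptic involution, giving $b=0$ and $a^{3}=-2$, a contradiction in $\QQ$. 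Hence $n=0$, $P$ and $P^{\tau}$ are hyperelliptic conjugates, and either $P=\infty$ or $P$ descends to a $\QQ$-point on the twist $C':y^{2}=6(-4x^{9}+1)$; the identical argument applied to $C'$ and $E':y^{2}=6(4x^{3}+1)$ (with $J'(\QQ)=\ZZ\cdot(\pi')^{*}\cQ'$) rules out the latter. No $p$-adic Chabauty, and no computation over $L$ beyond checking $E_{1}^{\prime}(L)$ earlier, is needed at any stage.
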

Since $u=\alpha \beta/\gamma^2$, Theorem~\ref{thm:F9} follows from
Lemma~\ref{lem:CL}. We now prove Lemma~\ref{lem:CL} through studying
$J(\Q)$ where $J$ is the Jacobian of $C$.
\begin{proof}
Let
\[
E \; : \; y^2=x^3+2,
\]
which is elliptic curve with Cremona label \texttt{1728a1}.
Let
\begin{equation}\label{eqn:pi}
\pi : C \rightarrow E, \qquad (x,y) \mapsto (-2x^3,y).
\end{equation}
Using \texttt{Magma} we find that $E$ has zero torsion and rank $1$ over $\Q$, and that in fact,
\[
E(\Q)=\Z \cdot (-1,1).
\]
We write $\Pic^0(E)$ for the group of rational degree $0$ divisors on $E/\Q$
modulo linear equivalence, and $\Pic^0(C)$ for the group of rational
degree $0$ divisors on $C/\Q$ modulo linear equivalence. We recall the standard isomorphism \cite[Proposition III.3.4]{SilvermanAEC}
\begin{equation} \label{eqn:iso}
E(\Q) \cong \Pic^0(E), \qquad P \mapsto [P-\infty],
\end{equation}
where $[D]$ denotes the linear equivalence class of a divisor $D$.
Thus
\[
\Pic^0(E) = \Z \cdot \mathcal{Q}, \qquad \mathcal{Q}=[(-1,1)-\infty].
\]

We also recall the standard isomorphism
$J(\Q) \cong \Pic^0(C)$, and we will represent elements of the Mordell--Weil group $J(\Q)$
as elements of $\Pic^0(C)$. 
Using \texttt{Magma} we find that $J$ has good reduction 
away from the primes $2$ and $3$.
Moreover a straightforward calculation in \texttt{Magma} returns that
\[
J(\F_5) \cong \Z/6\Z \times \Z/126\Z, \qquad J(\F_{13}) \cong \Z/42997\Z.
\]
As these two groups have coprime orders we conclude that $J$ has trivial torsion over $\Q$.
Moreover using \texttt{Magma} we find that $J$ has $2$-Selmer rank $1$ over $\Q$, so $J$ has rank at most $1$ over $\Q$.
The morphism $\pi$ in \eqref{eqn:pi} has degree $3$ and induces homomorphisms (see \cite[Section II.3]{SilvermanAEC})
\[
\pi_* \; : \; \Pic^0(C) \rightarrow \Pic^0(E), \qquad \left[ \sum a_i P_i \right] \mapsto \left[\sum a_i \pi(P_i) \right],
\]
and
\[
\pi^* \; : \; \Pic^0(E) \rightarrow \Pic^0(C), \qquad \left[\sum b_j Q_j \right] \mapsto \left[\sum b_j \sum_{P \in \pi^{-1}(Q_j)} e_\pi(P) \cdot P\right]
\]
where $e_\pi(P)$ denotes the ramification degree of $\pi$ at $P$.

Let
\[
\mathcal{P}=\pi^*(\mathcal{Q}) \; = \;  [ (1/\sqrt[3]{2},1)+(\omega/\sqrt[3]{2},1)+(\omega^2/\sqrt[3]{2},1)   -3 \infty] \in \Pic^0(C) \cong J(\Q).
\]
where $\omega$ is a primitive cube root of $1$.
The point $\mathcal{P}$ has infinite order on $J(\Q)$. Thus $J$ has rank exactly $1$ over $\Q$ and no torsion.
Therefore $J(\Q)=\Z \cdot \mathcal{P}^\prime$, for some $\mathcal{P}^\prime \in J(\Q)=\Pic^0(C)$. Hence
\[
\mathcal{P} = k \mathcal{P}^\prime
\]
where $k$ is a non-zero integer. Applying $\pi_*$ to both sides we obtain 
\[
k \pi_*(\mathcal{P}^\prime) = \pi_*(\mathcal{P}) = 3 \mathcal{Q}.
\]
However,  $\pi_*(\mathcal{P}^\prime) \in \Pic^0(E)=\Z \mathcal{Q}$, so 
\[
\pi_*(\mathcal{P}^\prime) = \ell \cdot \mathcal{Q}
\]
for some $\ell \in \Z$. Hence $k\ell=3$, so $k = \pm 1$ or $\pm 3$.
Using \texttt{Magma} we checked that the image of $\mathcal{P}$ under the composition
\[
J(\Q) \rightarrow J(\F_5) \rightarrow J(\F_5)/3 J(\F_5)
\] 
is non-zero. Thus $k \ne \pm 3$, so $k=\pm 1$, hence 
\[
J(\Q)=\Pic^0(C) = \Z \cdot \mathcal{P}.
\]

Suppose $P \in C(L)$. Let $\tau : L \rightarrow L$ be
the non-trivial automorphism. Then $[P+P^\tau -2 \infty] \in \Pic^0(C)$. 
Thus 
\[
[P+P^\tau - 2 \infty]  \; = \;  n \cdot \mathcal{P} \; = \; n \cdot \pi^*(\mathcal{Q}) \; = \; \pi^*(n \cdot \mathcal{Q})
\]
for some integer $n$. We claim that $n=0$. Suppose otherwise, then
$n \cdot \mathcal{Q} \in \Pic^0(E) \setminus \{0\}$
and by the isomorphism in \eqref{eqn:iso} we have $n \cdot \mathcal{Q}=[Q-\infty]$ where $Q \in E(\Q) \setminus \{\mathcal{O}\}$. Write $Q=(a,b) \in E(\Q)$ with $a$, $b \in \Q$.
Then 
\[
[P+P^\tau-2 \infty] \; = \; \pi^*([(a,b) - \infty]) \; =\; [D-3 \infty]
\]
where 
\[
D=P_1+P_2+P_3, \qquad P_j \; = \; \left( -\omega^{j-1} \sqrt[3]{a/2} \, , \,  b\right) , \qquad j=1,2,3.
\]
Hence
\[
D \;  \sim \; D^\prime, \qquad D^\prime=P+P^\tau+\infty
\]
where $\sim$ denotes linear equivalence on $C$. Write $\lvert D \rvert$ for the complete linear
system of effective divisors of $C$ linearly equivalent to $D$.
Let $r(D)=\dim \lvert D \rvert$. Note that $D^\prime \in \lvert D \rvert$ and $D^\prime \neq D$,
therefore $r(D) \ge 1$. By Riemann--Roch \cite[page 13]{ACGH},
\[
r(D) - i(D) \; = \; \deg(D) - g =-1,
\]
where $i(D) \ge 0$ is the so called index of speciality of $D$, and $g=4$ is the genus of $C$.
It follows that $i(D) >0$, and therefore that $D$ is a special divisor. By Clifford's theorem \cite[Theorem IV.5.4]{Hartshorne},
\[
r(D) \; \le \; \frac{\deg(D)}{2}=\frac{3}{2}.
\]
Hence $r(D)=1$. Thus the complete linear system $\lvert D \rvert$ is a $g^1_3$. As $C$ is hyperelliptic, by \cite[page 13]{ACGH}, $\lvert D \rvert=g^1_2+p$ where $p$ is a fixed base point of the linear system. In particular, every divisor in $\lvert D \rvert$ is the sum of $p$ and two points interchanged by the hyperelliptic involution. We apply this to $D$ itself. Thus two of the points $P_1$, $P_2$, $P_3$ are interchanged by the hyperelliptic involution. However, they all have the same $y$-coordinate $b$, so $b=0$. But $(a,b) \in E(\Q)$, so $a \in \Q$ and $a^3=-2$ giving a contradiction. 
Hence $n=0$, and so
\[
P+P^\tau \sim 2 \infty.
\]
Thus $P$, $P^\tau$ are interchanged by the hyperelliptic involution. We recall that we want to show that $P=\infty$. Suppose otherwise.
Then we can write $P=(c,d)$ where $c$, $d \in L$ and $c^\tau=c$, $d^\tau=-d$. Thus $c \in \Q$,
and $d=e/ \sqrt{3}$ with $e \in \Q$. Thus $P^\prime=(c,e) \in C^\prime(\Q)$ where 
\[
C^\prime \; : \; y^2=6(-4x^9+1).
\]
Let $J^\prime$ be the Jacobian of $C^\prime$, and
\[
E^\prime \; : \; y^2=6(4x^3+1).
\]
Using \texttt{Magma} we find that $E^\prime(\Q)=\Z \cdot (1/2,3)$. Let $\cQ^\prime=[(1/2,3)-\infty] \in \Pic^0(E^\prime)$,
so $\Pic^0(E^\prime)=\Z \cdot \cQ$. 
Let 
\[
\pi^\prime : C^\prime \rightarrow E^\prime, \qquad (x,y) \mapsto (-x^3,y).
\]
Using \texttt{Magma} we find that $J^\prime$ has trivial torsion and $2$-Selmer rank $1$,
and following the same steps as before show that $J^\prime(\Q)=\Pic^0(C)=\Z \cdot \cP^\prime$,
where $\cP^\prime=(\pi^\prime)^* (\cQ)$. Now $[P^\prime-\infty]=n \cP^\prime$
where $n$ is an integer, and must be non-zero as $P^\prime \ne \infty$.
Let $(f,g)=n \cdot (1/2,3) \in E^\prime(\Q) \setminus \{ \mathcal{O}\}$.
As before, we find that 
\[
P^\prime+2\infty \sim P_1^\prime+P_2^\prime+P_3^\prime, \qquad P_j^\prime = ( -\omega^{j-1} \cdot \sqrt[3]{f} \, ,\, g).
\]
Following the same steps as before, it follows that $g=0$, so $f^3=-1/4$ contradicting $f \in \Q$.
We can thus conclude that if $P\in C(L)$ then $P=\infty$. This completes the proof of Lemma~\ref{lem:CL} and therefore Theorem~\ref{thm:F9}. 
\end{proof}

\subsection{$n=6$}
We show that a $K$-point on the Fermat curve of degree 6 induces a $K$-point $P$ on a certain hyperelliptic curve $C$. 
Let $E$ be the elliptic curve obtained by taking the quotient of $C$ by a certain automorphism of $C$.
We find that $E(K)=E(\Q)=\Z$, and use this to show that $P$ is defined over a quadratic subfield of $K$.
This leads to the search of $\Q$-rational points on the twists of $C$ over the quadratic subfields of $K$.  

\begin{thm}\label{thm:6}
There are no non-trivial solutions to \eqref{Fermat} over $K$ for $n=6$.
\end{thm}

\begin{proof}
We find it convenient to let 
\[
F_{6} : x^6+y^6=z^6.
\]
That is, $F_{6}$ is the Fermat curve of degree 6. We will prove that $F_{6}(K)=\{(0 : -1 : 1), (-1 : 0 : 1), (0 : 1 : 1), (1 : 0 : 1) \}$, i.e., $F_{6}(K)$ consists only of trivial solutions. 
Suppose $(\alpha:\beta:\gamma)\in F_{6}(K)$ is a non-trivial solution. 
We can assume without loss of generality that $\alpha,\beta,\gamma$ are integral and coprime.
Similar to the proof of Theorem \ref{thm:F9}, observe
\[
\gamma^{12}-(\alpha^6-\beta^6)^2=(\alpha^6+\beta^6)^2-(\alpha^6-\beta^6)^2=4(\alpha\beta)^6.
\]
Let
\[
a=\dfrac{\alpha\beta}{\gamma^2}, \qquad b=\dfrac{\alpha^6-\beta^6}{\gamma^6}.
\]
Then $P=(a,b)\in C(K)$ where
\[
C: y^2=-4x^6+1.
\]
Let 
\[
E: y^2=x^3-4.
\]
This is the elliptic curve with Cremona label \texttt{432b1}. Let 
\[
\pi: C\rightarrow E,\qquad (x,y)\mapsto \left(\dfrac{1}{x^2},\;\dfrac{y}{x^3}\right),\qquad (0,\pm 1)\mapsto 0_{E}.
\]
We checked using \texttt{Magma} that $E$ has rank 1 over $K$ (and $\Q$) and that
\[
E(K)=E(\Q)\cong \Z.
\]
Since $\pi(P)\in E(\Q)$, it follows that $a^2\in\Q$ and hence $b^2\in\Q$. 
If $a=0$ then it's clear that $(\alpha:\beta:\gamma)$ is a trivial solution. 
Observe that $a,b$ are necessarily defined over the same quadratic subfield of $K$ since 
\[
\dfrac{b}{a}\in\Q.
\]
Either $a\in\Q$ and hence $b\in\Q$ or
\[
a=\frac{a^\prime}{\sqrt{d}}, \qquad b=\frac{b^\prime}{\sqrt{d}}, \qquad \text{for }\;d\in\{2,3,6\},\qquad a^\prime,b^\prime\in\Q.
\]
If $a,b\in\Q$ then $P\in C(\Q)$. The Jacobian of $C$ has rank 1 over $\Q$. 
Using the Chabauty implementation in \texttt{Magma}, we find that 
\[
C(\Q)=\{(0,\; \pm 1)\}
\]
and it immediately follows that $(\alpha:\beta:\gamma)$ is a trivial solution.
Thus, $(a^\prime,\; b^\prime d)\in C_{d}(\Q)$ where
\[
C_{d}:\; y^2=-4x^6+d^3,
\]
where $d\in\{2, 3, 6\}$. 
Suppose $d=3$ or $6$. We checked using \texttt{Magma} 
that there are no points on $C_{d}$ defined over $\Q_{2}$. 
Thus $C_{3}(\Q)=C_{6}(\Q)=\emptyset$.  
It remains to determine $C_{2}(\Q)$.
We will work with the model
\begin{equation}\label{eqn:C2}
C_{2}:\; y^2 = -x^6+2.
\end{equation}
We note that the curve $C_{2}$ has genus 2 and the rank of the Jacobian of $C_{2}$ over $\Q$ is 2. Thus, we are unable to determine $C_{2}(\Q)$ using Chabauty.
Instead, we used Bruin's elliptic curve Chabauty method \cite{Bruin03} to do so as we now demonstrate.

Let $\theta=\sqrt[6]{2}$, and note that $\theta$ is a root of the hyperelliptic polynomial
for $C_2$ given in \eqref{eqn:C2}. Let $L=\Q(\theta)$. Consider the map
\[
\varphi : C_2(\Q) \rightarrow L^*/(L^*)^2, \qquad (x,y) \rightarrow (x-\theta) \cdot (L^*)^2.
\]
The method of two-cover descent, due to Bruin and Stoll \cite{BruinStoll}, uses sieving
information to determine a small finite set containing the image of $\varphi$.
This is implemented in \texttt{Magma}, and applying it we find that
\[
\varphi(C_2(\Q)) \subseteq \{ (1+\theta) \cdot (L^*)^2, ~ (1-\theta) \cdot (L^*)^2     \} .
\]
Thus for a rational point $(x,y) \in C_2(\Q)$ we have 
\begin{equation}\label{eqn:xtheta}
x-\theta \; = \; (1\pm \theta) \beta^2
\end{equation}
with $\beta \in L^*$. 
Now let $F=\Q(\sqrt[3]{2})$, and 
note that $x^2- \sqrt[3]{2} = \Norm_{L/F}(x-\theta)$.
Observe that
\[
\Norm_{L/F}(1 \pm \theta)=(1-\theta)(1+\theta)=1-\sqrt[3]{2}.
\]
Taking norms in \eqref{eqn:xtheta} gives
\[
x^2- \sqrt[3]{2} = (1-\sqrt[3]{2}) w^2, \qquad w=\Norm_{L/F}(\beta) \in F^*.
\]
Note the factorisation
\[
C_{2}:\; y^2 = -x^6 + 2 = -(x^2 - \sqrt[3]{2})(x^4+\sqrt[3]{2}x^2+\sqrt[3]{2}^2).
\]
Thus for $(x,y) \in C_2(\Q)$ we have 
\[
x^4+\sqrt[3]{2}x^2+\sqrt[3]{2}^2 = \frac{-y^2}{x^2 - \sqrt[3]{2}}=\frac{-1}{(1-\sqrt[3]{2})} \cdot \frac{y^2}{w^2}.
\]
Let
$\epsilon=-1/(1-\sqrt[3]{2})=1+\sqrt[3]{2}+\sqrt[3]{2}^2 \in F^*$,  and  $z=y/w \in F^*$.
Then, for $(x,y) \in C_2(\Q)$ we have
\begin{equation}	
	\label{eq:6quartic}
		x^4+\sqrt[3]{2}x^2+\sqrt[3]{2}^2=\epsilon z^2.
\end{equation}
%We want to transform \eqref{eq:6quartic} into cubic form.
%Let
%\[
%r=\epsilon,\; s = \epsilon\cdot\sqrt[3]{2},\; t = \epsilon\cdot\sqrt[3]{2}^2,\; z_{1} = \epsilon z.
%\]
%Then after making these substitutions in \eqref{eq:6quartic} we have
%\[
%r^2x^{2}z_{1}^{2} = r x^{2}((rx^{2})^{2} + rsx^{2} + rt).
%\]
Let
\begin{equation}\label{eqn:fromEC}
X = \epsilon x^2\;\text{ and } Y = \epsilon^2 x z.
\end{equation}
Then $(X,Y)\in E_{2}(F)$ where $E_2/F$ is the elliptic curve
\[
E_{2}:\; Y^2 = X^3 +  \epsilon \sqrt[3]{2} X^2 + \epsilon^2 \sqrt[3]{2}^2 X.
%\; \text{ where } X/r\in \Q.
\]
Using \texttt{Magma} we found that the Mordell--Weil group is given by
\[
E_2(F) \; = \;  (\Z/2\Z) \cdot (0,0) \oplus \Z \cdot \left(1+ \sqrt[3]{2} + \sqrt[3]{2}^2, 5+ 4 \sqrt[3]{2} + 3\sqrt[3]{2}^2 \right). 
\]
We are interested in points $(X,Y) \in E_2(F)$ which satisfy \eqref{eqn:fromEC} where $(x,y) \in C_2(\Q)$.
In particular, to determine $C_2(\Q)$, it is enough to find all points $Q=(X,Y) \in E_2(F)$
such that $f(Q) \in \Q$, where $f(X,Y)=X/\epsilon$. 
Bruin's elliptic curve Chabauty method \cite{Bruin03} is one that can sometimes be used to 
provably determine all $F$-points $Q$ on an elliptic curve $E$
defined over a number field $F$, such that $f(Q) \in \Q$
for a given non-constant function $f \in F(E)$, provided the
degree $[F:\Q]$ exceeds the rank of $E$ over $F$. 
In our situation, the degree is $[F:\Q]=3$ and the rank of $E$ over $F$
is $1$. We applied the implementation of elliptic curve Chabauty
available in \texttt{Magma} to our $E_2/F$ and $f$. This succeeded
in showing that the only $(X,Y) \in E_2(F)$ with $X/\varepsilon \in \Q$
are
\[
(X,Y)=(0,0), \quad \left(1+ \sqrt[3]{2} + \sqrt[3]{2}^2, 5+ 4 \sqrt[3]{2} + 3\sqrt[3]{2}^2 \right),
\quad
\left(1+ \sqrt[3]{2} + \sqrt[3]{2}^2, -5- 4 \sqrt[3]{2} - 3\sqrt[3]{2}^2 \right).
\]
Thus $X=0$ or $\epsilon$, and hence if $(x,y) \in C_2(\Q)$,
then $x=0$ or $\pm 1$.
%
%
%
%Since we are trying to determine $C_{2}(\Q)$, we can now instead look for all $F$-rational
%points $(X,Y)$ on the elliptic curve $E_{2}$ with $X/r\in \Q$.
%Using this method, finding $C_{2}(\Q)$ amounts to finding all points $Q=(x_{Q}, y_{Q})\in E_{2}(F)$ where
%\[
%E_{2}:\; y^2=x^3+(\epsilon\cdot\sqrt[3]{2})x^2+(\epsilon^{2}\cdot \sqrt[3]{2}^2)x
%\]
%such that 
%\[
%\frac{x_{Q}}{\epsilon}\in\Q.
%\]
%Using \texttt{Magma} we find that if $(X,Y)\in E_{2}(F)$ such that $X/r\in\Q$ then
%\[
%\frac{X}{r}\in\{0, 1\}.
%\]
It immediately follows that
\[
C_{2}(\Q)=\{(\pm1,\; \pm1)\}.
\]
Thus, $(a^\prime,\; b^\prime)\in \{(\pm 1,\; \pm1)\}$ and if $P=(a,b)\in C(K)$ then $P\in\{(\pm\; 1/{\sqrt{2}},\; \pm\; 1/\sqrt{2})\}$.
Recall that
\[
b=\dfrac{\alpha^6-\beta^6}{\gamma^6},
\]
where $(\alpha:\beta:\gamma)\in F_{6}(K)$. 
It immediately follows that $(b+1)/2$ is a square in $K$. 
For each $b$, we check using \texttt{Magma} that $(b+1)/2$ is not a square in $K$. 
We have reached a contradiction. 
This completes the proof.
\end{proof}

\subsection{$n=4$} Quadratic points on the Fermat quartic have been studied by Aigner~\cite{Ai}, Faddeev~\cite{Fa} and Mordell~\cite{Mo}.
Mordell starts with the knowledge that there are no non-trivial points on the Fermat quartic over $\Q$, and studies points
over all quadratic fields. 
We generalise his method, observing that we can also classify points over quadratic extensions of certain quadratic fields. 
More precisely, if $L$ is any field for which there are no points on
the Fermat quartic, and if the two elliptic curves with Cremona labels \texttt{32a1} and \texttt{64a1} have rank $0$ over $L$, then we give a procedure to write down all the points on the Fermat quartic over quadratic extensions of $L$.\\

In an earlier version of this paper, we conjectured that there are no points on the Fermat quartic over any real biquadratic field. 
We thank Pedro Jos\'e Cazorla Garcia for pointing out to us that the point $(\sqrt{3}, 2, \sqrt{5})$ lies on the Fermat quartic over $\Q(\sqrt{3}, \sqrt{5})$.\\

After the completion of this work, we were made aware that Ishitsuka, Ito and Ohshita \cite[Theorem 7.3]{Ishitsuka2019ExplicitCO} have previously determined all points on the Fermat quartic lying in a quadratic extension of $\Q(\zeta_{8})$.
We thank the authors for making us aware of this. 
Since $\Q(\sqrt{2})$ is contained in $\Q(\zeta_{8})$, this is indeed stronger than the statement of Theorem ~\ref{thm:F4}.
We note that the authors of \cite{Ishitsuka2019ExplicitCO} study the Jacobian of the Fermat quartic over $\Q(\zeta_{8})$ and that the proof of Theorem ~\ref{thm:F4}, extending work of Mordell \cite{Mo}, makes use of a different strategy. 

\begin{thm}
 \label{thm:F4}
The points on the Fermat quartic lying in quadratic extensions of $\Q(\sqrt{2})$
lie in one of $\Q(\sqrt{2},i)$, $\Q(\sqrt{2},\sqrt{-7})$, $\Q(\sqrt[4]{2})$
or $\Q(\sqrt[4]{2}i)$.
\end{thm}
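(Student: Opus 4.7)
The plan is to extend Mordell's elementary descent argument \cite{Mo} from quadratic extensions of $\QQ$ to quadratic extensions of $L = \QQ(\sqrt{2})$. Mordell's method applies over any base field $L$ enjoying the two properties: (i) the Fermat quartic has no non-trivial $L$-points, and (ii) the elliptic curves with Cremona labels \texttt{32a1} and \texttt{64a1} both have Mordell--Weil rank zero over $L$. For $L = \QQ(\sqrt{2})$, property (i) is Jarvis--Meekin \cite{JarvisMeekin}, and property (ii) is a short \texttt{Magma} computation that I would record as a preliminary lemma.

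Let $M = L(\sqrt{d})$ be a quadratic extension, $\tau$ its non-trivial automorphism, and suppose $(x,y,z) \in M^3$ is a non-trivial point on $x^4+y^4=z^4$. First I would exploit the Galois symmetry: the symmetric combinations $x+x^\tau$, $xx^\tau$, $y+y^\tau$, $yy^\tau$, $z+z^\tau$, $zz^\tau$ all lie in $L$, and applying $\tau$ to the Fermat relation yields a system of polynomial identities over $L$ relating these. Following Mordell's manipulations, I would rearrange these identities to isolate auxiliary quantities (rational expressions in the symmetric functions, possibly multiplied by $\sqrt{d}$) which are forced to be $L$-rational points on \texttt{32a1} or \texttt{64a1} (in some cases via a birational identification of a quadratic twist with one of these curves after the factor $\sqrt{d}$ is absorbed). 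By hypothesis (ii), each such point is torsion.

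The remaining step is to enumerate the finitely many $L$-torsion points on \texttt{32a1} and \texttt{64a1} and, for each, back-solve for the associated $(x,y,z)$, reading off the field of definition. This exhaustive case analysis should produce exactly the four quadratic extensions $\QQ(\sqrt{2},i)$, $\QQ(\sqrt{2},\sqrt{-7})$, $\QQ(\sqrt[4]{2})$ and $\QQ(\sqrt[4]{2}i)$; in particular $\QQ(\sqrt{2},\sqrt{3})$ does not appear, which is how the theorem disposes of $n=4$ in the main theorem. The main obstacle I anticipate is the descent itself: because $\OO_L = \ZZ[\sqrt{2}]$ has infinite unit group and the prime $2$ ramifies in $L$, the Pythagorean-type parametrization underlying Mordell's argument is non-canonical, and the case analysis at the prime above $2$ must be arranged carefully so that every branch is accounted for and each is matched with a rational point on one of the two listed elliptic curves or with a directly contradictory equation.
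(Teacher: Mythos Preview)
Your broad plan matches the paper's: extend Mordell's method to $L=\QQ(\sqrt{2})$, relying on the rank-$0$ hypothesis for \texttt{32a1} and \texttt{64a1} over $L$, then run a finite torsion case-analysis. Two points, however, need correcting.

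First, Mordell's descent is purely rational, not integral: he parametrises via $t=(1-x^2)/y^2$, so that $x^2=(1-t^2)/(1+t^2)$ and $y^2=2t/(1+t^2)$. The unit group of $\OO_L$ and the ramification of $2$ play no role, so the obstacle you anticipate simply does not arise.

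Second, the paper does not pass through the symmetric functions $x+x^\tau$, $xx^\tau$, \dots\ as you propose, and it is not clear how that route would manufacture $L$-points on the two elliptic curves without going via the Jacobian (which is the genuinely different method of Ishitsuka--Ito--Ohshita). Instead one splits on whether $t\in L$ or $t\in K\setminus L$. In the first case, requiring $x$ and $y$ to lie in the \emph{same} quadratic extension of $L$ forces one of $1-t^4$, $2t(1+t^2)$, $2t(1-t^2)$ to be a square in $L$, i.e.\ gives an $L$-point on \texttt{32a1} or \texttt{64a1} directly. In the second case---and this is Mordell's key trick, which your proposal does not reproduce---with $F(z)$ the minimal polynomial of $t$ over $L$, one writes $A=(1+t^2)xy=\lambda+\mu t$ and $B=(1+t^2)y=\lambda'+\mu' t$ with $\lambda,\mu,\lambda',\mu'\in L$; then the cubics $(\lambda+\mu z)^2-2z(1-z^2)$ and $(\lambda'+\mu' z)^2-2z(1+z^2)$ vanish at $z=t$, hence are divisible by $F(z)$, and the residual \emph{linear} factor over $L$ contributes an $L$-rational root $z_1$ (resp.\ $z_2$), producing $L$-points on \texttt{64a1} (resp.\ \texttt{32a1}). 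The final case-analysis runs over the finitely many pairs $(z_1,z_2)$ coming from torsion; the extra $L$-torsion on \texttt{64a1} at $z=-1\pm\sqrt{2}$ is precisely what yields the additional fields $\QQ(\sqrt[4]{2})$ and $\QQ(\sqrt[4]{2}\,i)$.
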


\begin{proof}
Let $L=\Q(\sqrt{2})$, and let $K$ be a quadratic extension of $L$. 
We will determine all points the Fermat quartic $F_{4}: x^4+y^4=1$ in $K$, using the same strategy as Mordell (where, of course, Mordell works with a quadratic extension $K$ of $L=\Q$). Let $t=\frac{1-x^2}{y^2}$, so that $x^2+ty^2=1$. This gives a
parametrisation
\[
x^2=\frac{1-t^2}{1+t^2},\qquad y^2=\frac{2t}{1+t^2}.
\]
We point out that if $x,y\in K$ then $x^2, y^2\in K$ and therefore
so is $t$.\\

Suppose first that $t\in L$. Then $x^2, y^2\in L$. In order for $x$ and $y$ to lie in the same quadratic extension $K$ of $L$, either $x\in L$, $y\in L$ or $x/y\in L$. This means that one of
\[
\frac{1-t^2}{1+t^2},\quad \frac{2t}{1+t^2}\quad \text{ or }\quad \frac{2t}{1-t^2}
\]
is a square in $L$. Equivalently, $(1-t^2)(1+t^2)$, $2t(1+t^2)$ or $2t(1-t^2)$ is a square in $L$. These correspond to $L$-rational points of one of the curves
\[
    u^2=(1-t^2)(1+t^2),\quad u^2=2t(1+t^2), \quad u^2=2t(1-t^2).
\]
Both of the first two possibilities are isomorphic to $E_{1}: y^2=x^3+4x$ 
(the elliptic curve with Cremona label \texttt{32a1}) via the maps 
$(t,u)\mapsto\left(\frac{2t+2}{1-t},\frac{u}{(1-t)^2}\right)$ and
$(t,u)\mapsto(2t,2u)$ respectively, and the third to $E_{2}: y^2=x^3-4x$
(the elliptic curve with Cremona label \texttt{64a1}) via $(t,u)\mapsto(-2t,2u)$. We checked, using \texttt{Magma}, that $E_{1}$ and $E_{2}$ have rank 0 over $L$. We first consider $E_{1}$ and find
\[
    E_{1}(L)=E_{1}(\Q)=\{\mathcal{O}, (0, 0), (2, \pm 4)\}.
\]
We find that these points correspond on the first curve to $t=\pm1$ and $t=0$,
and on the second to $t=0$, $t=1$ and $t=\infty$. These values of $t$ correspond to 
\[
(x^2,y^2)= \{(1,0), (-1,0), (0,1), (0,-1)\},
\]
corresponding to points on $F_{4}$ defined over $\Q$ or $\Q(i)$.
Similarly,
\begin{align*}
    E_{2}(L) = \{ \mathcal{O}, (0, 0), (\pm 2,0)\}&\\
    &\cup \{(2+2\sqrt{2},\pm(4+4\sqrt{2}), (2-2\sqrt{2},\pm(4-4\sqrt{2})    \},
\end{align*}
and the rational points correspond to $t=\pm1$ and $t=0$, and the point at infinity to $t=\infty$, as before. 
The points in $E(L)\setminus E(\Q)$ correspond to $t=-1\pm\sqrt{2}$, and
these give 
\[
(x^2,y^2)\in \{(1/\sqrt{2},1/\sqrt{2}),\; (-1/\sqrt{2},-1/\sqrt{2})\}, 
\]
corresponding to points on $F_{4}$ defined over $\Q(\sqrt[4]{2})$ or $\Q(\sqrt[4]{2}i)$.\\

We now suppose $t\in K,\; t\not\in L$.
We write $F(t)=t^2+\beta t+\gamma$ for the minimal polynomial of $t$
over $L$, so $\beta,\gamma\in L$.
We let $A=(1+t^2)xy$ and $B=(1+t^2)y$, so that
\[
A^2=2t(1-t^2), \qquad B^2=2t(1+t^2).
\]
Since $A^2, B^2\in K$ and $K=L(t)$, we can write
\[
A=\lambda+\mu t, \quad B=\lambda^\prime +\mu^{\prime}t,
\qquad \lambda,\; \mu,\; \lambda^\prime,\; \mu' \in L.
\]
Comparing the two expressions for $A$ yields
\[
(\lambda+\mu t)^2=2t(1-t^2).
\]
In particular, the equation
$$(\lambda+\mu z)^2-2z(1-z^2)=0$$
has a root $z=t$. 
As the equation is defined over $L$, the left-hand side is divisible
by the minimal polynomial $F(z)$, and, as this is a cubic, we have
\begin{equation}\tag{M1}
(\lambda+\mu z)^2-2z(1-z^2)=F(z)(\rho+\sigma z),
\end{equation}
a factorisation over $L$ (so $\rho, \sigma\in L$).
Then $z=-\rho/\sigma$ is a solution to the left-hand side of (M1) defined over $L$. In particular,
we have a solution with $z\in L$ to
$$Y^2=2z(1-z^2)=-2z^3+2z,$$
where $Y=\lambda+\mu z\in L$. Thus we get an $L$-point on the elliptic curve $Y^2=-2X^3+2X$, which is isomorphic to the elliptic curve $E_{2}$, and the points in $E_{2}(L)$ correspond to $z=\pm1$, $z=0$ and $z=-1\pm\sqrt{2}$.
In exactly the same way, looking at $B^2$, we will get a solution over $L$ to
\begin{equation}\tag{M2}
(\lambda'+\mu' z)^2-2z(1+z^2)=F(z)(\rho'+\sigma'z),
\end{equation}
and therefore a solution over $L$
to $Y^2=2z(1+z^2)$, which is isomorphic to $E_{1}$. The points in $E_{2}(L)$ correspond to $z=0$ and $z=1$.

We will now consider all these cases, as in Mordell. We write $(z_1,z_2)$ for the situation where the equation (M1) is solved by
$z_1$ and equation (M2) is solved by $z_2$.
We remark that these calculations are quite involved, and we therefore omit some details.

\subsubsection*{\textbf{Case 1.} $(-1, 1)$} This is Mordell's case (VI). If $z_{1}=-1$ is a root of the left-hand side of (M1) then $\lambda+\mu=0$ and, since $-1$ must then be a root of the right-hand side of (M1), it follows that $\rho+\sigma=0$. 
Similarly, if $z_{1}=1$ is a root of the left-hand side of (M2) then $\lambda'+\mu'=2$, $\rho'+\sigma'=0$.
Equation (M1) is
$$\lambda^2(1+z)-2z(1-z)=\rho F(z)$$
(after dividing out by $1-z$). 
We can rewrite the left-hand side of (M2) as $(2-\mu'+\mu'z)^2-2z-2z^3=\rho'(1-z)F(z)$. Thus, after dividing through by $1-z$, we get
$$(\text{M2}): 2(z^2+z+2)-4\mu'+\mu'^2(1-z)=\rho'F(z).$$
Both (M1) and (M2) have the same coefficient of $z^2$, so $\rho=\rho'$. Comparing constant
terms and $z$ terms:
$$\lambda^2=(2-\mu')^2,\quad \lambda^2-2=2-\mu'^2,$$
so either $(\lambda, \mu')=(0,2)$ or $(\lambda, \mu')=(\pm 2,0)$. In the first case, (M1) becomes $-2z(1-z)=\rho\cdot F(z)$ but this contradicts the irreducibility of $F(z)$. In the second case,
\[
(\text{M1}): \rho F(z)=4(1+z)-2z(1-z)=2(z^2+z+2),
\]
so $F(z)=z^2+z+2$. Thus, $t=\frac{-1\pm\sqrt{-7}}{2}$ and $K=L(\sqrt{-7})$.

\subsubsection*{\textbf{Case 2.} $(-1, 0)$} 
This is Mordell's case (III).
In order for $z=-1$ to be a root of the
left-hand side of (M1), we need $(\lambda-\mu)^2=0$.
So $\lambda-\mu=0$.
Similarly, for $z=0$ to be a root of the left-hand side of
(M2), we need $\lambda'=0$.
Then for the left-hand side of (M1) to have $-1$
as a root, the same will be true of the right-hand side, 
so $\rho-\sigma=0$.
Equation (M1) is then divisible by $(1+z)$, and dividing through, we get
\[
(\text{M1}): \lambda^2(1+z)-2z(1-z)=\rho\cdot F(z).
\]
We rewrite this as
\[
(\text{M1}): 2z^2+(\lambda^2-2)z+\lambda^2=\rho\cdot F(z).
\]
In order for $z=0$ to be a root of the left-hand side of (M2), it must be that $\lambda^\prime=0$, and thus
\[
(\text{M2}): -2z^2+\mu'{}^2z-2=\sigma'F(z).
\]
The right-hand sides of (M1) and (M2) differ by a constant, and upon comparing the $z^2$ coefficients on the left-hand sides, we
see that they differ by a factor of $-1$. 
Then comparing the constant term, we 
get $\lambda^2=2$. Thus $\lambda=\mu=\pm\sqrt{2}$. 
The coefficient of $z$ in the first equation is $\lambda^2-2$, and 
the coefficient of $z$ in the second is $\mu'{}^2$, so $\mu'=0$. Then
$Y=\lambda'+\mu't=0$. But $Y^2=2t(1+t^2)$, so this means that $t=0$, 
contradicting $t\notin L$, or $(1+t^2)$ in which case $t=i$ and $K=L(i)$.\\

For the remaining pairs $(z_{1}, z_{2})$, in each case, after performing a similar analysis, we reach a contradiction to the fact $\lambda, \mu, \lambda^\prime, \mu^\prime \in L$, and thus no solutions are found in these cases.
\end{proof}

This completes the proof of Theorem \ref{mainthm}.

\section{More general real biquadratic fields}\label{sec:genfields}

We give examples of obstacles that arise in generalising the 
proof of Theorem \ref{mainthm} to more general real biquadratic fields. 
As in the proof of Theorem \ref{mainthm}, we apply level-lowering (Theorem \ref{levellowering})
to the Frey curve \eqref{eqn:Frey} for $p\geq 17$ and $E_{13,\epsilon}$ for $p=13$.

\subsection{$K=\Q(\sqrt{2},\sqrt{5})$} In order to apply level-lowering (Theorem 
\ref{levellowering}), one needs to demonstrate the modularity 
of the Frey curve over $K$. 
It has not yet been proven that elliptic curves 
over totally real quartic fields containing $\sqrt{5}$ are modular 
(see \cite[Section 7.1]{box2021elliptic} for a discussion concerning 
this problem). 
We remark however that establishing the modularity of the 
Frey curve over this particular field $K$ may be possible through the use 
of \cite[Theorem 7]{freitas2014elliptic}.

\subsection{$K=\Q(\sqrt{2},\sqrt{7})$} 
Write $\OO_K$ for the ring of integers of $K$. 
A straightforward computation in \texttt{Magma} 
returns that $K$ has class number $1$, and $2\OO_{K}=\fP^{4}$. 
A straightforward generalisation of Lemmata \ref{lem:poss}, \ref{lem:13surj} and \ref{lem:13} 
returns that the lowered level is $\fP^t$ where 
$t=1, 5, 8$ or $16$. 
In particular the dimension of Hilbert newforms 
of parallel weight $2$ and level $\fP^{16}$ is $40960$ 
making the elimination step currently computationally infeasible 
in this case.

\subsection{$K=\Q(\sqrt{2},\sqrt{11})$}

Write $\OO_K$ for the ring of integers of $K$. 
A straightforward computation in \texttt{Magma} 
returns that $K$ has class number $1$, and $2\OO_{K}=\fP^{4}$. 
By a direct generalisation of the techniques outlined in Section \ref{sec:irred} 
it is straightforward to see that $\modpg$ 
is irreducible for $p\geq 13$.

A straightforward generalisation of Lemmata \ref{lem:poss}, \ref{lem:13surj} and \ref{lem:13}  
returns that the lowered level is $\fP^t$ where 
$t=1, 4$ or $5$. 
As is true for $\Q(\sqrt{2}, \sqrt{3})$, there are no Hilbert newforms of parallel weight $2$ and level $\fP$ over $K$.
There are $44$ Hilbert newforms of parallel weight $2$ and 
level $\fP^4$ and $76$ Hilbert newforms of parallel weight $2$ and level $\fP^5$ over $K$. 
In order to get a contradiction, we make use of the standard method of eliminating newforms given by the following 
Lemma \cite[Lemma 7.1]{FLTsmall}.

\begin{lem}[Freitas and Siksek]\label{lem:newformelim}
    Let $K$ be a totally real field, and let $p \ge 5$ be a prime. 
    Let $E$ be an elliptic curve over $K$
    of conductor $\cN$,
    and let $\ff$ be a newform of parallel weight $2$
    and level $\cN_p$.
    Let $t$ be a positive integer satisfying $t \mid \# E(K)_{\mathrm{tors}}$.
    Let $\fq\nmid t \mathcal{N}_{p}$ be a prime ideal of $\OO_K$ and let 
    \[
    \mathcal{A_{\mathfrak{q}}}=\{a\in\Z:\quad \lvert a \rvert\leq 2\sqrt{\Norm(\mathfrak{q})}\;, 
    \quad \Norm(\mathfrak{q})+1-a\equiv 0 \pmod{t} \}.
    \]
    If $\modpg\sim \overline{\rho}_{\mathfrak{f},\varpi}$ 
    then $\varpi$ divides the principal ideal 
    \[
    B_{\mathfrak{f},\mathfrak{q}}=\Norm(\mathfrak{q})((
    \Norm(\frak{q})+1)^2-a_{\mathfrak{q}}(\mathfrak{f})^2)\prod\limits_{a\in\mathcal{A_{\mathfrak{q}}}}(a-a_{\mathfrak{q}}(\mathfrak{f}))\cdot \mathcal{O}_{\Q_{\mathfrak{f}}}.
    \]
    \end{lem}
We briefly explain how to apply Lemma \ref{lem:newformelim}.
Namely let  
    \[
    B_{\mathfrak{f}}=\sum\limits_{\mathfrak{q}\in T}B_{\mathfrak{f}, \mathfrak{q}},
    \]
    where $T$ is a 
    small set of primes $\mathfrak{q}\nmid t \mathcal{N}_{p}$. 
Let $C_{\mathfrak{f}}=\Norm_{\Q_{\mathfrak{f}}/\Q}(B_{\mathfrak{f}})$. 
Then Lemma \ref{lem:newformelim} asserts that $p\mid C_{\mathfrak{f}}$.
We wrote a short program to implement Lemma \ref{lem:newformelim} in 
\texttt{Magma} with $\mathcal{N}_p=\fP^4$ or $\fP^5$, with $t=4$ and $T$ equal to the set of prime ideals $\mathfrak{q}\neq \fP$ of $K$ with 
norm less than 90.
From this implementation we found that if $\Gisom$, where $E$ is our Frey curve and $\mathfrak{f}$ 
is a newform of level $\mathcal{N}_p$ then $p=2$ or $3$.

We remark that the proofs of Theorems \ref{thm:F9} and \ref{thm:6} do not readily generalise to $K$.
In combination with the remarks made in 
Section \ref{modapproach}, this leads to the following result.

\begin{thm}
    Let $K=\Q(\sqrt{2},\sqrt{11})$. 
    There are no non-trivial solutions to \eqref{Fermat} over $K$ for all primes $n\geq 5$.
    \end{thm}

\bibliographystyle{abbrv}
\bibliography{Fermat}
\nocite{MR1484478}
\end{document}